\titleformat{\subsection}[runin]
{\bfseries} {\thesubsection{.}}{0.15cm}{}[.]
\titleformat{\subsubsection}[runin]
{\em}{\thesubsubsection{.}}{0.15cm}{}[.]
\newtheorem{theorem}{Theorem}[section]
\newtheorem{proposition}[theorem]{Proposition}
\newtheorem{lemma}[theorem]{Lemma}
\newtheorem{claim}[theorem]{Claim}
\newtheorem{corollary}[theorem]{Corollary}
\theoremstyle{definition}
\newtheorem{definition}[theorem]{Definition}
\newtheorem{remark}[theorem]{Remark}
\numberwithin{equation}{section}
\numberwithin{figure}{section}
\newcommand\Bcal{\mathcal{B}}
\newcommand\Pcal{\mathcal{P}}
\newcommand\Fcal{\mathcal{F}}
\newcommand\Dscr{\mathscr{D}}
\newcommand\Oscr{\mathscr{O}}
\newcommand\Gscr{\mathscr{G}}
\newcommand\C{\mathbb{C}}
\newcommand\D{\overline{\mathbb D}}
\newcommand\CP{\mathbb{CP}}
\renewcommand\D{\mathbb D}
\newcommand\N{\mathbb{N}}
\renewcommand\P{\mathbb{P}}
\newcommand\R{\mathbb{R}}
\newcommand\Z{\mathbb{Z}}
\renewcommand\b{\mathbb{B}}
\renewcommand\c{\mathbb{C}}
\newcommand\cp{\mathbb{CP}}
\renewcommand\d{\mathbb D}
\newcommand\n{\mathbb{N}}
\renewcommand\r{\mathbb{R}}
\newcommand\s{\mathbb{S}}
\newcommand\z{\mathbb{Z}}
\newcommand\igot{\mathfrak{i}}
\renewcommand\igot{\mathfrak{i}}
\newcommand\pgot{\mathfrak{p}}
\renewcommand\imath{\igot}
\newcommand\wt{\widetilde}
\newcommand\dist{\mathrm{dist}}
\newcommand\length{\mathrm{length}}
\newcommand\Flux{\mathrm{Flux}}
\newcommand\CMI{\mathrm{CMI}}
\def\dist{\mathrm{dist}}
\def\length{\mathrm{length}}
\def\Flux{\mathrm{Flux}}
\begin{document}


\fancyhead[LO]{The Gauss map assignment for minimal surfaces}
\fancyhead[RE]{A.\ Alarc\'on and F.J.\ L\'opez}
\fancyhead[RO,LE]{\thepage}

\thispagestyle{empty}


\begin{center}
{\bf\Large On the Gauss map assignment for minimal surfaces
\\\smallskip  
and the Osserman 
curvature estimate
}

\medskip

%
%
{\bf Antonio Alarc\'on  and  Francisco J.\ L\'opez}
\end{center}

%
%
\medskip

\begin{quoting}[leftmargin={7mm}]
{\small
\noindent {\bf Abstract}\hspace*{0.1cm}
The Gauss map of a conformal minimal immersion of an open Riemann surface $M$ into $\R^n$, $n\ge 3$, is a holomorphic map $M\to{\bf Q}^{n-2}\subset \C\P^{n-1}$. Denote by ${\rm CMI}_{\rm full}(M,\R^n)$ and $\Oscr_{\rm full}(M,{\bf Q}^{n-2})$ the spaces of full conformal minimal immersions $M\to\R^n$ and full holomorphic maps $M\to{\bf Q}^{n-2}$, respectively, endowed with the compact-open topology. In this paper we show that the Gauss map assignment $\Gscr:{\rm CMI}_{\rm full}(M,\R^n)\to \Oscr_{\rm full}(M,{\bf Q}^{n-2})$,  taking a full conformal minimal immersion to its Gauss map, is an open map. This implies, in view of a result of Forstneri\v c and the authors, that $\Gscr$ is a quotient map. The same results hold for the map  $(\Gscr,\Flux):{\rm CMI}_{\rm full}(M,\R^n)\to \Oscr_{\rm full}(M,{\bf Q}^{n-2})\times H^1(M,\r^n)$, where $\Flux:{\rm CMI}_{\rm full}(M,\R^n)\to H^1(M,\r^n)$ is the flux assignment. 
 As application, we establish that the set of maps $G\in \Oscr_{\rm full}(M,{\bf Q}^{n-2})$ such that the family $\Gscr^{-1}(G)$ of all minimal surfaces in $\R^n$ with the Gauss map $G$ satisfies the classical Osserman curvature estimate, is meagre in the space of holomorphic maps $M\to {\bf Q}^{n-2}$.

\noindent{\bf Keywords}\hspace*{0.1cm} 
Minimal surface, Riemann surface, Gauss map, curvature estimate, Baire category theorem.


\noindent{\bf Mathematics Subject Classification (2020)}\hspace*{0.1cm} 
53C42, 
53A05, 
54E52. 
}
\end{quoting}


\section{Introduction and main results}\label{sec:intro}

\noindent
Throughout the paper, $n\ge 3$ is an integer, $M$ is an open Riemann surface, and ${\rm CMI}(M,\R^n)$ denotes the space of all conformal minimal immersions $M\to\R^n$ endowed with the compact-open topology. 
If $u=(u_1,\ldots,u_n)\in\CMI(M,\R^n)$, then the $(1,0)$-differential $\partial u=(\partial u_1,\ldots,\partial u_n)$ of $u$ is holomorphic and satisfies
\begin{equation}\label{eq:conformal}
	\partial u\neq 0 \;\text{ and }\; \sum_{j=1}^n (\partial u_j)^2=0 \;\text{ everywhere on $M$};
\end{equation} 
see e.g.\ \cite[Theorem 2.3.4]{AlarconForstnericLopez2021Book}.
It therefore determines the Kodaira-type holomorphic map $\Gscr(u)$ from $M$ to the hyperquadric
\[
	{\bf Q}^{n-2}=\big\{[z_1:\cdots:z_n]\in \C\P^{n-1}\colon z_1^2+\cdots+z_n^2=0\big\}\subset \C\P^{n-1}
\]
given by
\begin{equation}\label{eq:G(u)}
	\Gscr(u)(p)=[\partial u_1(p):\cdots:\partial u_n(p)],\quad p\in M.
\end{equation}
The map $\Gscr(u):M\to{\bf Q}^{n-2}$ is called the {\em generalized Gauss map}, or simply the {\em Gauss map}, of the conformal minimal immersion $u:M\to\R^n$. The Gauss map is of central interest in the classical theory of minimal surfaces. In particular, the fact that a conformal immersion $M\to\R^n$ is minimal if and only if its Gauss map is holomorphic (see \cite{Bonnet1860,Christoffel1867} and \cite[Theorem 1.1]{HoffmanOsserman1980}) has enabled to exploit complex-analytic methods to study minimal surfaces in Euclidean space, leading to many fundamental developments in the theory.  We refer to \cite[Chapter 12]{Osserman1986} and \cite[Chapter 5]{AlarconForstnericLopez2021Book} for historical background and further references. 
Conversely, if we are given a  $\c^n$-valued holomorphic $1$-form $\Phi=(\phi_1,\ldots,\phi_n)$ on $M$ such that $\Phi\neq 0$ and  $\sum_{j=1}^n \phi_j^2=0$  everywhere on $M$, and the real part $\Re \Phi$ is exact, then for any base point $p_0\in M$ and initial condition $x_0\in \r^n$ the map
\begin{equation}\label{eq:conversely}
u\colon M\to \r^n,\quad u(p)=x_0+\int_{p_0}^p \Re \Phi,
\end{equation}
is a conformal minimal immersion with $2\partial u=\Phi$ and   $\Gscr(u)=[\phi_1:\cdots:\phi_n]$. 

The {\em flux map} $\Flux(u)\in H^1(M,\r^n)\equiv{\rm Hom}(H_1(M,\Z),\R^n)$ of $u\in\CMI(M,\R^n)$ is the cohomology class given by 
\begin{equation}\label{eq:fluxxxx}
\Flux(u): H_1(M,\z)\to \r^n,\quad  \Flux(u)([C])=-2\imath \int_C\partial u;
\end{equation}
see e.g. \cite[Def.\ 2.3.2]{AlarconForstnericLopez2021Book}.
The flux is  the measure of obstruction to exactness of $\partial u$. From the physical point of view, its values along closed curves  are related to the surface tension or pressure forces of soap films.
We  endow $H_1(M,\Z)\cong \prod_{\gamma\in \Bcal} \z[\gamma]$, where $\Bcal$ is a  homology basis of $M$, with the product topology, and $H^1(M,\r^n)$ with the compact-open topology.
\subsection{The Gauss map assignment}\label{ss:TGMA}

Denote by $\Oscr(M,{\bf Q}^{n-2})$ the space of all holomorphic maps $M\to{\bf Q}^{n-2}$ endowed with the compact-open topology. 
By Cauchy estimates, it is easily seen that the {\em Gauss map assignment} 
\[
	\Gscr:\CMI(M,\R^n)\to \Oscr(M,{\bf Q}^{n-2}),
\]
sending an immersion $u\in \CMI(M,\R^n)$ to its Gauss map $\Gscr(u)\in \Oscr(M,{\bf Q}^{n-2})$ given in \eqref{eq:G(u)},
is a continuous map.
And it has been proved only recently that $\Gscr$ is surjective; that is, every holomorphic map $M\to {\bf Q}^{n-2}$ is the Gauss map of a conformal minimal immersion $M\to\R^n$ \cite[Theorem 1.1]{AlarconForstnericLopez2019JGA} (see also \cite[Theorem 5.4.1]{AlarconForstnericLopez2021Book}). It is thus reasonable to refer to the elements of $\Oscr(M,{\bf Q}^{n-2})$ as {\em Gauss maps} (of conformal minimal immersions $M\to\r^n$), and we will do so in this paper. 
The {\em flux assigment}
\[
 \Flux:\CMI(M,\R^n)\to H^1(M,\r^n),
\]
sending $u\in \CMI(M,\R^n)$ to its flux map $\Flux(u)\in H^1(M,\r^n)$ given in \eqref{eq:fluxxxx},
is easily seen to be continuous as well by the same arguments.

A holomorphic map $M\to\C\P^{n-1}$ is {\em full} if its image is not contained in any hyperplane. We denote by $\Oscr_{\rm full}(M,{\bf Q}^{n-2})\subset\Oscr(M,{\bf Q}^{n-2})$ the subspace of full maps. It is clear that $\Oscr_{\rm full}(M,{\bf Q}^{n-2})$ is open in $\Oscr(M,{\bf Q}^{n-2})$; it is also a dense subset since ${\bf Q}^{n-2}$ is an Oka manifold (see \cite[Example 4.4]{AlarconForstneric2014IM} or \cite[Example 5.6.2]{Forstneric2017E}). Likewise, an immersion $u\in\CMI(M,\R^n)$ is said to be {\em full} if its Gauss map $\Gscr(u)$ is full; 
and we denote by $\CMI_{\rm full}(M,\R^n)\subset \CMI(M,\R^n)$ the open subspace of full immersions (see \cite[Def.\ 2.5.2]{AlarconForstnericLopez2021Book}). It turns out that $\CMI_{\rm full}(M,\R^n)$ is dense in $\CMI(M,\R^n)$ by \cite[Theorem 5.3]{AlarconForstnericLopez2016MZ} (note that in \cite{AlarconForstnericLopez2016MZ} full conformal minimal immersions are called nondegenerate); see also \cite[Proposition 3.3.2 and Theorem 3.6.1]{AlarconForstnericLopez2021Book}.
The restriction
\[
	(\Gscr,\Flux):\CMI_{\rm full}(M,\R^n)\to \Oscr_{\rm full}(M,{\bf Q}^{n-2})\times H^1(M,\r^n)
\]
is continuous and surjective as well  (see \cite[Theorem 1.1]{AlarconForstnericLopez2019JGA} or \cite[Theorem 5.4.1]{AlarconForstnericLopez2021Book}). Furthermore, it has been recently proved that this map is in fact a Serre fibration \cite[Theorem 1.1]{AlarconLarusson2024Pisa}; that is, it satisfies the homotopy lifting property with respect to all CW-complexes. 

In this paper we delve into the study of the topological properties of the map $(\Gscr,\Flux)$ above by establishing its openness. In particular, we show that the Gauss map assignment for full immersions $\Gscr:\CMI_{\rm full}(M,\R^n)\to \Oscr_{\rm full}(M,{\bf Q}^{n-2})$
is an open map as well. Here is our main result.
%
%
\begin{theorem}\label{th:Gauss-F}
Let $M$ be an open Riemann surface and $n\ge 3$ an integer. Then the   map   
\begin{equation}\label{eq:G-F}
(\Gscr,\Flux):\CMI_{\rm full}(M,\R^n)\to \Oscr_{\rm full}(M,{\bf Q}^{n-2})\times H^1(M,\r^n)
\end{equation}
is an open  quotient map. In particular, the same result  holds true for the maps $\Gscr:\CMI_{\rm full}(M,\R^n)\to \Oscr_{\rm full}(M,{\bf Q}^{n-2})$ and $\Flux:\CMI_{\rm full}(M,\R^n)\to  H^1(M,\r^n)$.
\end{theorem}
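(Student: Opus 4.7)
The plan is first to reduce the theorem to the openness of the joint map \eqref{eq:G-F}. The continuity of $\Gscr$ and $\Flux$ is already noted in the introduction, and the joint surjectivity onto $\Oscr_{\rm full}(M,{\bf Q}^{n-2})\times H^1(M,\r^n)$ is \cite[Theorem 1.1]{AlarconForstnericLopez2019JGA}. Since any continuous open surjection is automatically a quotient map, and the coordinate projections of an open map are themselves open, the theorem will follow once openness of $(\Gscr,\Flux)$ is established.

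To prove openness, fix $u_0\in\CMI_{\rm full}(M,\R^n)$ with $\Phi_0:=\partial u_0$, $G_0:=\Gscr(u_0)$, $F_0:=\Flux(u_0)$, and let $U$ be a neighborhood of $u_0$. The goal is to exhibit a neighborhood $V$ of $(G_0,F_0)$ such that every $(G,F)\in V$ is realized as $(\Gscr(u),\Flux(u))$ by some $u\in U$. Any conformal minimal immersion is determined up to a constant by the holomorphic $\C^n$-valued $1$-form $\Phi=\partial u$, which is nowhere vanishing, takes values in the null cone $\{z\in\C^n:\sum_j z_j^2=0\}$, and satisfies $\int_\gamma\Phi=\tfrac{\imath}{2}F([\gamma])$ for all $\gamma\in H_1(M,\z)$. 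I plan to construct the required $\Phi$ in two steps. First, I continuously lift $G$ to a nowhere-zero null-valued holomorphic $1$-form $\Phi^G$ with $\Phi^{G_0}=\Phi_0$, using a local holomorphic cross-section of the tautological $\C^*$-bundle $\{z\in\C^n\setminus\{0\}:\sum_j z_j^2=0\}\to{\bf Q}^{n-2}$ around the image of $G_0$ and a $\dibar$- or Cousin-type argument on $M$ to assemble the section globally. Second, I replace $\Phi^G$ by $g\,\Phi^G$ for a suitable nowhere-vanishing holomorphic function $g\in\Oscr^*(M)$ close to the constant $1$; since scalar multiplication by $g$ preserves the Gauss map, the identity $\Gscr(u)=G$ is retained for $\partial u=g\,\Phi^G$, while $g$ is used to enforce the prescribed periods.

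The key analytic input for the second step is a period-killing lemma of the kind developed in \cite{AlarconForstnericLopez2019JGA} and \cite[Chapter 3]{AlarconForstnericLopez2021Book}: by fullness of $G$, the linearization at $g\equiv 1$ of the map $g\mapsto\big(\int_\gamma g\,\Phi^G\big)_{\gamma\in\Bcal}$ is surjective onto every finite-dimensional subspace of periods, essentially because fullness of $G$ is equivalent to the $\C$-linear independence of the components of $\Phi^G$ on every open subset of $M$. The main obstacle is that when $M$ has infinite topology the target $H^1(M,\r^n)$ is itself infinite-dimensional, so the correction cannot be made in a single step but must be interleaved with a Runge exhaustion $M_1\Subset M_2\Subset\cdots\Subset M$: on each $M_k$ one corrects the finitely many new periods arising from homology classes in $M_k\setminus M_{k-1}$ while preserving, up to a small controlled error, the corrections already achieved on $M_{k-1}$. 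A standard recursive Runge/Mergelyan scheme then yields a global $g\in\Oscr^*(M)$ with $g\,\Phi^G$ having exactly the prescribed periods and lying arbitrarily close to $\Phi_0$ on every compact subset, producing the desired $u\in U$ with $(\Gscr(u),\Flux(u))=(G,F)$ and thereby establishing openness.
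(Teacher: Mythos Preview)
Your outline is correct and parallels the paper's two-step structure: first lift the Gauss map to a holomorphic ${\bf A}_*^{n-1}$-valued map close to $\Dscr(u_0)$, then correct periods by a nowhere-vanishing scalar multiplier. The paper packages the first step as openness of $\pi_*:\Oscr(M,{\bf A}_*^{n-1})\to\Oscr(M,{\bf Q}^{n-2})$ (Corollary~\ref{co:Gauss}) and the second as Proposition~\ref{pr:Psi}; combined via the factorization $\Gscr=\pi_*\circ\Dscr$ these yield openness of $(\Gscr,\Flux)$.

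Where you differ is in the lifting mechanism. You propose a continuous local section $G\mapsto\Phi^G$ of $\pi_*$ built from local holomorphic sections of the tautological bundle and a Cousin argument on $M$. The paper instead proves openness of $\pi_*$ concretely: given the lift $f=\Dscr(u_0)$ and \emph{any} global lift $g$ of a nearby $G$, it manufactures a nowhere-vanishing $\varphi_g$ with $\varphi_g g$ close to $f$ on a Runge compact $L$, via a Hurwitz-type count comparing the zeros of $g_1$ and $f_1$ on $L$ together with an explicit meromorphic correction of prescribed divisor (Claim~\ref{cl:mero}), and then Runge-extends $\varphi_g$ to $M$. Your Cousin sketch has a gap as written: closeness of $G$ to $G_0$ is only in the compact-open topology, so outside a fixed compact the bundle sections chosen ``around the image of $G_0$'' need not receive $G(p)$, and the global gluing on all of $M$ does not go through directly. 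This is fixable by localizing the Cousin argument to a Runge compact and then multiplying by a Runge-extended $\C^*$-valued function---which is essentially the paper's route. Note too that a genuinely continuous section is more than you need: mere existence of a close lift (i.e.\ openness of $\pi_*$) suffices for the subsequent period argument.

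For the period step, you propose an inductive exhaustion scheme to handle infinite topology. The paper instead works on a single Runge compact $L$ with finite homology basis $\Bcal\subset H_1(L,\Z)$, uses a fixed period-dominating multiplicative spray $\Xi(\zeta,\cdot)$ (built once from $\Dscr(u_0)$ and stable under small perturbations of the lift) to hit the finitely many target periods over $\Bcal$, and then invokes \cite[Theorem~4.1]{AlarconForstnericLopez2019JGA} as a black box to globalize with the full prescribed flux over $H_1(M,\Z)$. Your exhaustion scheme would essentially reprove that theorem; both approaches are valid, the paper's being shorter by outsourcing the recursion.
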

In Section \ref{sec:intro-2} we explain a rather direct application of Theorem \ref{th:Gauss-F} concerning the classical curvature estimate for minimal surfaces introduced by Osserman in \cite{Osserman1960,Osserman1964}. We expect that this so basic topological property of the Gauss map assignment will lead to further applications other than those in this paper.

Let us say a word about the proof of Theorem \ref{th:Gauss-F}. Denote by
\begin{equation}\label{eq:null}
	{\bf A}_*^{n-1}=\big\{ z\in\c^n_*=\c^n\setminus\{0\}\colon z_1^2+\cdots+z_n^2=0\big\}
\end{equation}
the punctured null quadric in $\C^n$, by $\Oscr(M,{\bf A}_*^{n-1})$ the space of holomorphic maps $M\to {\bf A}_*^{n-1}$ endowed with the compact-open topology, and by $\Oscr_{\rm full}(M,{\bf A}_*^{n-1})$ the subspace of full maps, that is, with the image not contained in any hyperplane of $\C^n$.
Also let $\pi:{\bf A}_*^{n-1}\to{\bf Q}^{n-2}$ be the restriction of the canonical projection $\pi:\C^n_*\to\C\P^{n-1}$, and define 
$\pi_*\colon  \Oscr(M,{\bf A}_*^{n-1})\to  \Oscr(M,{\bf Q}^{n-2})$ by
\begin{equation}\label{eq:pi*}
	\quad \pi_*(f)=\pi \circ f=[f_1:\cdots:f_n],\quad f=(f_1,\ldots,f_n)\in \Oscr(M,{\bf A}_*^{n-1}).
\end{equation}
Fix a holomorphic $1$-form $\theta$ on $M$ vanishing nowhere (see \cite{GunningNarasimhan1967MA}), and recall that $\partial u/\theta\in \Oscr(M,{\bf A}_*^{n-1})$ for all $u\in \CMI(M,\R^n)$; see \eqref{eq:conformal} and \eqref{eq:null}. We can thus consider the map $\Dscr: \CMI(M,\R^n)\to \Oscr(M,{\bf A}_*^{n-1})$ given by
\begin{equation}\label{eq:Psi(u)}
	\Dscr(u)=\partial u/\theta,\quad u\in \CMI(M,\R^n).
\end{equation}
Then the Gauss map assignment for full immersions  factorizes in a natural way as
\[
	\Gscr=\pi_*\circ\Dscr:\CMI_{\rm full}(M,\R^n)\to \Oscr_{\rm full}(M,{\bf Q}^{n-2}),
\]
where the domain and range of $\Dscr$ and $\pi_*$ are restricted to the corresponding subspaces of full maps. Therefore, we have the following diagram.
\begin{equation}\label{eq:diagram*}
\xymatrix{
	\CMI_{\rm full}(M, \r^n)  \ar@{->}[rrr]^{ (\Dscr\,,\, \Flux)\;\;\;\;\quad}  \ar@{->}[ddrrr]_{ (\Gscr\,,\, \Flux)} 
& &	&   \Oscr_{\rm full}(M,{\bf A}_*^{n-1})\times  H^1(M,\r^n)   \ar@{->}[dd]^{ \pi_*\times {\rm Id}} \\
& & & \\
&	&   &   \Oscr_{\rm full}(M,{\bf Q}^{n-2})\times H^1(M,\r^n).
}
\end{equation}
The proof of Theorem \ref{th:Gauss-F} then consists of two steps. 
In the first one we prove that the map $\pi_*$ is open; see Corollary \ref{co:Gauss} and the more precise result in Proposition \ref{pr:CCP}. However, the map $\Dscr$ is clearly not, except in the case when $M$ is simply connected, due to the period problem; hence the same holds for $(\Dscr, \Flux)$. In the second step of the proof we overcome this difficulty by showing that for any open set $U\subset  \CMI_{\rm full}(M,\R^n)$ and any immersion $u\in U$ there exist neighborhoods $V$ of $\Dscr(u)$ in $\Oscr_{\rm full}(M,{\bf A}_*^{n-1})$ and $W$ of $\Flux(u)$ in $H^1(M,\r^n)$ such that $\pi_*(V)\times W\subset (\Gscr,\Flux)(U)$; see Proposition \ref{pr:Psi}. These two facts together trivially lead to the openness of $(\Gscr,\Flux)$.

Since $\Oscr_{\rm full}(M,{\bf Q}^{n-2})$ is an open subset of $\Oscr(M,{\bf Q}^{n-2})$, Theorem \ref{th:Gauss-F} ensures that the map $(\Gscr,\Flux):\CMI_{\rm full}(M,\R^n)\to \Oscr(M,{\bf Q}^{n-2})\times H^1(M,\r^n)$ is open as well. 
However, Theorem \ref{th:Gauss-F} does not seem to imply that the unrestricted map $(\Gscr,\Flux):\CMI(M,\R^n)\to \Oscr(M,{\bf Q}^{n-2})\times H^1(M,\r^n)$ be an open map, which remains an open question. The same holds for the component  maps $\Gscr:\CMI(M,\R^n)\to \Oscr(M,{\bf Q}^{n-2})$ and $\Flux:\CMI(M,\R^n)\to  H^1(M,\r^n)$. Indeed, due to technical difficulties, our approach does not adapt to the situation when the given conformal minimal immersion $u:M\to\R^n$ in the second step of the proof is not full. 

\subsection{The Osserman curvature estimate}\label{sec:intro-2}
For an immersed (connected) surface $\varphi:\Sigma\to\R^n$ and a point $p\in\Sigma$, we denote by  $K_\varphi$ the Gauss curvature function of $\Sigma$ and by  $d_\varphi(p)$ the geodesic distance from $p$ to the ideal boundary of $\Sigma$, that is, the infimum of the intrinsic lengths of divergent paths in $\Sigma$ leaving from $p$. Note that $\varphi$ is complete if and only if $d_\varphi(p)=+\infty$ for some (hence for all) $p\in\Sigma$.

An immersed minimal surface $\varphi: \Sigma \to \r^n$ is {\em nonflat} if it is not contained in any affine plane, or equivalently, if  $K_\varphi\not\equiv 0$. A family $\Fcal$ of nonflat immersed minimal surfaces in $\R^n$ satisfies the {\em Osserman curvature estimate} (see \cite[Theorem 1]{Osserman1960} or \cite[Theorem 1.2]{Osserman1964})
if there is a constant $C=C(\Fcal)>0$ such that
\begin{equation}\label{eq:Ke}
	|K_\varphi(p)|\,d_\varphi(p)^2\le C
\end{equation}
 for every  $\varphi:\Sigma\to\R^n$ in $\Fcal$ and every point $p\in \Sigma$.
Note that if $\Fcal$ satisfies the Osserman curvature estimate then it contains no complete surface. Since the expression in the left hand side of \eqref{eq:Ke} is invariant under composition of  $\varphi$ with homotheties and translations, it is natural to assume that the family $\Fcal$ is closed under these transformations. If this is the case, then the family $\Fcal$ satisfies the Osserman curvature estimate if and only if \eqref{eq:Ke} holds for every  $\varphi:\Sigma\to\R^n$  in  $\Fcal$ and every point $p\in\Sigma$ with $K_\varphi(p)=-1$. This curvature estimate plays a crucial role in some fundamental results in the classical theory of minimal surfaces which depend only on the Gauss map. For instance, Fujimoto 
\cite{Fujimoto1988JMSJ,Fujimoto1992} and Osserman and Ru  
 \cite{OssermanRu1997} showed that the family of nonflat minimal surfaces in $\R^n$ whose Gauss maps omit a given set of $k>n(n+1)/2$ hyperplanes  in $\cp^{n-1}$ located in general position satisfies the Osserman curvature estimate. This shows that the only complete minimal surfaces in $\R^n$ with the Gauss map omitting $k$ such hyperplanes are the planes (see \cite{Osserman1960,Osserman1964} again for previous partial results). Likewise, Schoen 
\cite{Schoen1983}  established that the family of nonflat stable minimal surfaces in $\R^3$ satisfies the Osserman curvature estimate, which implies that the planes are the only complete stable minimal surfaces in $\R^3$ (see \cite{Fischer-ColbrieSchoen1980,Pogorelov1981,doCarmoPeng1997} for alternative proofs of this and recall that stability depends only on the Gauss map of the minimal surface). We refer to Ros \cite{Ros2022} for further discussion on the Osserman curvature estimate and its relation with the Gauss map. 

It is therefore natural to wonder when a set of Gauss maps $\varnothing\neq Y\subset \Oscr(M,{\bf Q}^{n-2})$ has the property that the family $\Gscr^{-1}(Y)\subset \CMI(M,\R^n)$ of all minimal surfaces in $\R^n$ with the Gauss map in $Y$ satisfies the Osserman curvature estimate. 
Recall that $\Gscr^{-1}(Y)$ is always nonempty \cite[Theorem 1.1]{AlarconForstnericLopez2019JGA} (see also \cite[Theorem 5.4.1]{AlarconForstnericLopez2021Book}), and it is clearly closed under scalings and translations. In this paper we focus in the most relevant case when the set $Y$ consists of a single map. 

%
%
\begin{definition}\label{def:pseudo}
Let $M$ be an open Riemann surface, $n\ge 3$ an integer, and $G\in  \Oscr_{\rm full}(M,{\bf Q}^{n-2})$ a full Gauss map.

(a) We say that $G$ satisfies the {\em Osserman curvature estimate at $p\in M$} if
\begin{itemize} 
\item $p$ is a critical point of $G$ and $\Gscr^{-1}(G)$ contains no complete immersion, or
\smallskip
\item  $p$ is a noncritical point of $G$ and there is a constant $C=C(G,p)>0$ with
\begin{equation}\label{eq:Kudu}
	|K_u(p)|\,d_u(p)^2\le C\quad \text{for all $u\in\Gscr^{-1}(G)$.}
\end{equation}
  \end{itemize}
%
We denote by $\Oscr^{K,p}_{\rm full}(M,{\bf Q}^{n-2})$ the set of Gauss maps in $\Oscr_{\rm full}(M,{\bf Q}^{n-2})$ satisfying the Osserman curvature estimate at the point $p$, and $\Oscr^{^\neg K,p}_{\rm full}(M,{\bf Q}^{n-2})=\Oscr_{\rm full}(M,{\bf Q}^{n-2})\setminus \Oscr^{K,p}_{\rm full}(M,{\bf Q}^{n-2})$.

(b) We say that $G$ satisfies the {\em Osserman curvature estimate} if the family of full immersed minimal surfaces $\Gscr^{-1}(G)$ does, that is, there is a constant $C=C(G)>0$ such that
\eqref{eq:Kudu} holds
for every conformal minimal immersion $u\in \Gscr^{-1}(G)$ and every point $p\in M$. We denote by $\Oscr^K_{\rm full}(M,{\bf Q}^{n-2})$ the set of Gauss maps in $\Oscr_{\rm full}(M,{\bf Q}^{n-2})$ satisfying the Osserman curvature estimate, and $\Oscr^{^\neg K}_{\rm full}(M,{\bf Q}^{n-2})=\Oscr_{\rm full}(M,{\bf Q}^{n-2})\setminus \Oscr^K_{\rm full}(M,{\bf Q}^{n-2})$.
\end{definition}
It is clear that 
\begin{equation}\label{eq:inclu-example}
	\Oscr^K_{\rm full}(M,{\bf Q}^{n-2})\subset \bigcap_{p\in M}\Oscr^{K,p}_{\rm full}(M,{\bf Q}^{n-2}),
\end{equation}
and so, passing to complements,
\begin{equation}\label{eq:complements}
	\bigcup_{p\in M}\Oscr^{^\neg K,p}_{\rm full}(M,{\bf Q}^{n-2})\subset \Oscr^{^\neg K}_{\rm full}(M,{\bf Q}^{n-2}).
\end{equation}
In Section \ref{sec:Example}, we give an example (with $M=\D$ and $n=3$) which shows that the inclusion \eqref{eq:inclu-example} is not an equality in general. 
Recall that an immersion $u\in \CMI(M,\R^n)$ satisfies $K_u(p)=0$ if and only if $p$ is a critical point of $\Gscr(u)$ (see \cite[Eq.\ (3.8)]{HoffmanOsserman1980}).   Condition \eqref{eq:Kudu} in  Definition \ref{def:pseudo} (a) ensures  that if $G\in\Oscr^{K,p}_{\rm full}(M,{\bf Q}^{n-2})$ and  $p$ is a noncritical point of $G$ then  $\Gscr^{-1}(G)$ contains no complete immersion.
An interesting question is to understand how $\Oscr^{K,p}_{\rm full}(M,{\bf Q}^{n-2})$ depends on the point $p\in M$, even locally. Roughly speaking, a map $G\in  \Oscr_{\rm full}(M,{\bf Q}^{n-2})$  satisfies the Osserman curvature estimate if it is far from being the Gauss map of a complete conformal minimal surface $M\to \r^n$.

Note that $\Oscr(M,{\bf Q}^{n-2})$ is completely metrizable 
 and so a Baire space by the Baire Category Theorem. 
 Our second main result
 shows that the set of Gauss maps in $\Oscr(M,{\bf Q}^{n-2})$ which satisfy the Osserman curvature estimate (Definition \ref{def:pseudo}) is meagre, that is, {\em thin} or {\em negligible} in Baire category sense.

\begin{theorem}\label{th:Gmeagre}
Let $M$ be an open Riemann surface and $n\ge 3$. Then the set $\Oscr^{^\neg K,p}_{\rm full}(M,{\bf Q}^{n-2})$ is residual in $\Oscr(M,{\bf Q}^{n-2})$ for every $p\in M$. In particular, a generic Gauss map in $\Oscr(M,{\bf Q}^{n-2})$ is full and does not satisfy the Osserman curvature estimate.
\end{theorem}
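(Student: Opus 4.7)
The plan is to realize $\Oscr^{\neg K,p}_{\rm full}(M,{\bf Q}^{n-2})$ as a superset of a dense $G_\delta$ in the completely metrizable space $\Oscr(M,{\bf Q}^{n-2})$ and then invoke the Baire Category Theorem. For each integer $m\ge 1$ we put
\[
S_m=\bigl\{u\in\CMI_{\rm full}(M,\R^n)\colon |K_u(p)|\,d_u(p)^2>m\bigr\},\qquad L_m=\Gscr(S_m)\subset\Oscr_{\rm full}(M,{\bf Q}^{n-2}),
\]
and observe that any $G\in\bigcap_{m\ge 1}L_m$ is automatically full, is noncritical at $p$ (since each representative $u\in S_m$ has $K_u(p)\ne 0$), and satisfies $\sup_{u\in\Gscr^{-1}(G)}|K_u(p)|\,d_u(p)^2=\infty$; consequently $\bigcap_{m\ge 1}L_m\subset\Oscr^{\neg K,p}_{\rm full}(M,{\bf Q}^{n-2})$, and the task reduces to showing that each $L_m$ is open and dense in $\Oscr(M,{\bf Q}^{n-2})$.

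To prove openness we will use that $u\mapsto K_u(p)$ is continuous on $\CMI(M,\R^n)$ in the compact-open topology by Cauchy estimates, and that $u\mapsto d_u(p)$ is lower semicontinuous. The latter we will verify by fixing $r<d_{u_0}(p)$, choosing a relatively compact domain $\Omega\Subset M$ with $p\in\Omega$ whose $u_0$-intrinsic distance $d^{\Omega}_{u_0}(p)$ to $\partial\Omega$ already exceeds $r$ (possible since $d^{\Omega}_{u_0}(p)$ tends to $d_{u_0}(p)$ along an exhaustion of $M$), noting that $d^{\Omega}_u(p)$ is continuous in $u|_{\overline\Omega}$, and using that every divergent path from $p$ must cross $\partial\Omega$. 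This makes $S_m$ open in $\CMI_{\rm full}(M,\R^n)$, and Theorem \ref{th:Gauss-F} then forces $L_m=\Gscr(S_m)$ to be open in $\Oscr(M,{\bf Q}^{n-2})$.

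For density, given $G_0\in\Oscr(M,{\bf Q}^{n-2})$ and a neighbourhood $V$ of $G_0$, we may assume $G_0\in\Oscr_{\rm full}$ by open-density of that subspace, and we pick $u_0\in\Gscr^{-1}(G_0)$ using the surjectivity of $\Gscr$ \cite[Theorem 1.1]{AlarconForstnericLopez2019JGA}. If $K_{u_0}(p)=0$, the fact that $\{G:dG_p=0\}$ is a closed nowhere-dense subset of $\Oscr$, together with the openness of $\Gscr$ from Theorem \ref{th:Gauss-F}, permits us to replace $u_0$ by a nearby full immersion with $K_{u_0}(p)\ne 0$ and $\Gscr(u_0)\in V$. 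Next we will invoke the Calabi--Yau-type approximation theorem for conformal minimal immersions (see, e.g., \cite[Chapter 7]{AlarconForstnericLopez2021Book}) to produce $u\in\CMI(M,\R^n)$ as close to $u_0$ on any prescribed compact $K\subset M$ as we wish and with $d_u(p)$ as large as we wish (indeed, with $u$ complete). Taking $K$ large and the error small then secures $u\in\CMI_{\rm full}$, $|K_u(p)|>|K_{u_0}(p)|/2>0$, and $\Gscr(u)\in V$; choosing $d_u(p)^2>2m/|K_{u_0}(p)|$ places $\Gscr(u)\in L_m\cap V$.

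The Baire Category Theorem will then yield that $\bigcap_{m\ge 1}L_m$, and hence the larger set $\Oscr^{\neg K,p}_{\rm full}(M,{\bf Q}^{n-2})$, is residual in $\Oscr(M,{\bf Q}^{n-2})$; combined with \eqref{eq:inclu-example} and the open-density of $\Oscr_{\rm full}(M,{\bf Q}^{n-2})$ this will also furnish the generic statement at the end of the theorem. The main technical obstacle will be the Calabi--Yau-type approximation of $u_0$ by long (or complete) conformal minimal immersions with prescribed behaviour on a compact set; the remainder is a marriage of Theorem \ref{th:Gauss-F} with the elementary lower semicontinuity of $u\mapsto d_u(p)$.
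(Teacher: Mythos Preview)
Your proposal is correct and follows essentially the same route as the paper: both write the target set as $\bigcap_m \Gscr(S_m)$ for $S_m=\{u\in\CMI_{\rm full}:|K_u(p)|\,d_u(p)^2>m\}$, establish openness of $S_m$ via continuity of $K_u(p)$ and lower semicontinuity of $d_u(p)$ (the paper does this by writing $S_m=\bigcup_j\{u:|K_u(p)|\,\dist_u(p,bL_j)^2>m\}$ over an exhaustion, which is exactly your $\Omega$-argument), push openness through $\Gscr$ via Theorem~\ref{th:Gauss-F}, and obtain density by approximating any full $u_0$ with $K_{u_0}(p)\neq 0$ by a complete immersion (the paper cites \cite[Theorem 1.2]{AlarconCastro-Infantes2019APDE}/\cite[Theorem 3.9.1]{AlarconForstnericLopez2021Book} rather than Chapter~7). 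The only cosmetic difference is that the paper first restricts to $X=\{K_u(p)\neq 0\}$ and shows $\Gscr(X)$ is open dense, whereas you handle the case $K_{u_0}(p)=0$ by an ad hoc perturbation using openness of $\Gscr$.
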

Recall that a set in a Baire space is
%
%
{\em residual} 
if it contains a dense set that can expressed as a countable intersection of open sets, and is {\em meagre} (or {\em of first category}) 
if its complement is residual.
It is customary to say that a property of elements in a Baire space is {\em generic}, or that a {\em generic} element satisfies the property, if the property holds on a residual set.
In other words, generic properties are those enjoyed by {\em almost all} elements in Baire category sense; see, e.g., \cite[Sec.\ 25]{Willard1970}. Using this terminology, Theorem \ref{th:Gmeagre} (see Theorem \ref{th:residual} for a more precise statement) ensures that if we are given a point $p\in M$ then a generic Gauss map  $G\in  \Oscr(M,{\bf Q}^{n-2})$ satisfies that $dG_p\neq 0$ and 
\[
\sup\{d_u(p)\colon u\in \Gscr^{-1}(G),\, K_u(p)=-1\}=+\infty.
\]
The final assertion in the theorem is then guaranteed by \eqref{eq:complements}.
So, roughly speaking, almost all maps in $\Oscr(M,{\bf Q}^{n-2})$ are the Gauss map of a full conformal minimal surface $M\to\R^n$ which is, under curvature normalization at a given point, as close as desired of being complete. 
It is a long-standing open question to recognize which holomorphic maps are the Gauss map of a complete minimal surface; we refer to \cite[Chapter 5]{AlarconForstnericLopez2021Book} for background and references and to \cite{AlarconLarusson2024Pisa} for recent progresses from a different viewpoint. Denote by $\Oscr^c_{\rm full}(M,{\bf Q}^{n-2})$ the set of maps in $\Oscr_{\rm full}(M,{\bf Q}^{n-2})$ which are the Gauss map of a complete full conformal minimal immersion $M\to\r^n$. By Definition \ref{def:pseudo}, we have
\begin{equation}\label{eq:complete}
	\Oscr^{\rm c}_{\rm full}(M,{\bf Q}^{n-2})\subset \bigcap_{p\in M} \Oscr^{^\neg K,p}_{\rm full}(M,{\bf Q}^{n-2}).
\end{equation}
In Section \ref{sec:example-2} we give an example (also with $n=3$) that shows that the inclusion \eqref{eq:complete} is not an equality in general; note that every such has no critical points. The set $\Oscr^{\rm c}_{\rm full}(M,{\bf Q}^{n-2})$ is dense in $\Oscr(M,{\bf Q}^{n-2})$ by \cite[Theorem 5.6]{AlarconForstnericLopez2016MZ} and \cite[Theorem 1.1]{AlarconForstnericLopez2019JGA} (see also \cite[Theorems 5.4.1 and 3.9.1]{AlarconForstnericLopez2021Book}). Furthermore, the inclusion $\Oscr^{\rm c}_{\rm full}(M,{\bf Q}^{n-2})\hookrightarrow\Oscr(M,{\bf Q}^{n-2})$ is a weak homotopy equivalence, and a homotopy equivalence if $M$ is of finite topology, by  \cite[Theorem 1.2]{AlarconLarusson2024Pisa}. Moreover, $\Oscr^{^\neg K,p}_{\rm full}(M,{\bf Q}^{n-2})$ is a residual set in $\Oscr(M,{\bf Q}^{n-2})$ for every $p\in M$ by Theorem \ref{th:Gmeagre}, while a generic minimal surface in the completely metrizable space $\CMI(M,\R^n)$ is full and complete by \cite[Theorem 1.2]{AlarconLopez2024-2}. However, the question whether a generic map in $\Oscr(M,{\bf Q}^{n-2})$ is the Gauss map of a complete full conformal minimal immersion in $\CMI(M,\R^n)$ remains open.
%
%
%

\subsection*{Organization of the paper} We explain the first step of the proof of Theorem \ref{th:Gauss-F} in Section \ref{sec:pi*}. We then continue with the second step and complete the proof of the theorem in Section \ref{sec:Gauss}. Section \ref{sec:Gmeagre} is dedicated to the results concerning the Osserman curvature estimate. We prove Theorem \ref{th:Gmeagre} in Section \ref{sec:proofT-2}, and show by examples that the inclusions \eqref{eq:inclu-example} and \eqref{eq:complete} are in general proper in Sections \ref{sec:Example} and \ref{sec:example-2}. Finally, we point out in Section \ref{sec:homotopy} that the inclusions 
\[
\Oscr^{^\neg K,p}_{\rm full}(M,{\bf Q}^{n-2}) \hookrightarrow
	\Oscr^{^\neg K}_{\rm full}(M,{\bf Q}^{n-2}) \hookrightarrow
	 \Oscr(M,{\bf Q}^{n-2}),\quad p\in M,
\] 
are weak homotopy equivalences, thereby determining the rough shape of the first two spaces.


\section{Openness of $\pi_*\colon  \Oscr(M,{\bf A}_*^{n-1})\to  \Oscr(M,{\bf Q}^{n-2})$}\label{sec:pi*}

\noindent
In this section we explain the first step in the proof of Theorem \ref{th:Gauss-F}. It is provided by Corollary \ref{co:Gauss}. We begin with the following result of independent interest. Recall that $\pi:\C^n_*\to\C\P^{n-1}$ denotes the canonical projection given by $\pi(z_1,\ldots,z_n)=[z_1:\cdots:z_n]$.

%
%
\begin{proposition}\label{pr:CCP}
Let $M$ be an open Riemann surface and $n\ge 3$ an integer. Then the map  
\[
	\pi_*:\Oscr(M,\C^n_*)\to  \Oscr(M,\CP^{n-1}), \quad
	\pi_*(f)=\pi\circ f,
\]
is continuous, open, and surjective, hence a quotient map.
\end{proposition}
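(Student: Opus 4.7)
Continuity of $\pi_*$ is a standard property of the compact-open topology: post-composition with the continuous map $\pi$ preserves uniform convergence on compacts, so $\pi_*$ is continuous. Surjectivity comes from the triviality of holomorphic line bundles on the open Riemann surface $M$: since $M$ is Stein with $H^1(M,\Ocal)=0$ (Behnke--Stein) and $H^2(M,\z)=0$, the exponential sheaf sequence yields $H^1(M,\Ocal^*)=0$; hence the pullback $g^*\Ocal(-1)$ of the tautological bundle under any $g\in\Oscr(M,\CP^{n-1})$ admits a nowhere-vanishing global holomorphic section, which is a holomorphic lift $f\in\Oscr(M,\C^n_*)$ of $g$. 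Once openness is established, the quotient conclusion is automatic, since any continuous open surjection is a quotient map.

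The heart of the argument is openness, which I would reduce to the following local statement: for every $f_0\in\Oscr(M,\C^n_*)$, every compact $K\subset M$, and every $\epsilon>0$, there exists a compact-open neighborhood $W$ of $g_0=\pi\circ f_0$ in $\Oscr(M,\CP^{n-1})$ such that each $g\in W$ admits a global holomorphic lift $f\in\Oscr(M,\C^n_*)$ with $\sup_K|f-f_0|<\epsilon$. The construction relies on a Bezout-type identity on the Stein manifold $M$: since the coordinates of $f_0$ have no common zero, Cartan's Theorem~B produces $\phi_1,\dots,\phi_n\in\Ocal(M)$ with $\sum_{j}\phi_j f_{0,j}\equiv 1$. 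For any $g$ with a local representative $u$ on an open subset of $M$, the expression
\[
\tau(g)(p)=\frac{u(p)}{\sum_{j}\phi_j(p)\,u_j(p)}
\]
is independent of the choice of $u$ and defines a holomorphic lift of $g$ wherever the denominator is nonzero. Fixing a Runge compact $K'$ with $K\subset K'\subset M$, for $g$ sufficiently close to $g_0$ in the compact-open topology on $K'$ the denominator stays uniformly close to $1$ on $K'$; hence $\tau(g)$ is a holomorphic lift of $g$ on a neighborhood of $K'$ with $\tau(g_0)=f_0$ and $\tau(g)\to f_0$ uniformly on $K'$ as $g\to g_0$.

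The final step---and the main technical hurdle---is to upgrade the local lift $\tau(g)$ on $K'$ to a global lift of $g$ on all of $M$ while retaining the approximation of $f_0$ on $K$. The plan is to invoke the already-established surjectivity of $\pi_*$ to pick any global lift $\tilde g\in\Oscr(M,\C^n_*)$ of $g$, form the nowhere-vanishing ratio $\lambda=\tilde g/\tau(g)\colon K'\to\C^*$, approximate $\lambda$ uniformly on $K'$ by a globally defined $\mu\in\Oscr(M,\C^*)$, and set $f=\tilde g/\mu$: this $f$ is then a global holomorphic lift of $g$ satisfying $f\approx\tau(g)\approx f_0$ on $K$. The existence of $\mu$ is an Oka--Weil-type approximation statement for the Oka manifold $\C^*$ on the Runge compact $K'$ in the Stein manifold $M$; the only potential topological obstruction---that $\lambda$ admit a continuous extension to a map $M\to\C^*$---vanishes because the restriction $H^1(M,\z)\to H^1(K',\z)$ is surjective for Runge compacts in open Riemann surfaces. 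Assembling the local lifting, the choice of global lift, and the $\C^*$-valued approximation yields the required lift $f$ and completes the proof of openness.
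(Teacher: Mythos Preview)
Your argument is correct but follows a genuinely different route from the paper's. The paper fixes one coordinate, say $f_{0,1}\not\equiv 0$, and for $g$ projectively close to $f_0$ on a smoothly bounded Runge domain $L$ shows via a Fubini--Study estimate and a winding-number argument that $g_1$ has the same number of zeros as $f_{0,1}$ in each component of a small neighbourhood of the zero set; a divisor-matching lemma (their Claim~\ref{cl:mero}) then manufactures a meromorphic function $\phi_g$ on $L$ with divisor $(g_1|_L)(f_{0,1}|_L)^{-1}$ and $\phi_g\approx 1$ on $bL$, so that $\varphi_g=\phi_g\, f_{0,1}/g_1$ is the desired nowhere-vanishing multiplier on $L$, extended to $M$ by Runge for $\C_*$. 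You instead invoke a single Bezout identity $\sum_j\phi_j f_{0,j}=1$ from Cartan's Theorem~B to produce a canonical local lift $\tau(g)$ depending continuously on $g$, and then globalize by dividing an arbitrary global lift by an Oka--Weil approximant of the scalar ratio. Your approach is cleaner and transfers verbatim to Stein manifolds of arbitrary dimension, where the one-variable divisor bookkeeping is unavailable; the paper's version is more elementary, relying only on classical Runge and Hurwitz rather than Theorem~B or the Oka principle.

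One minor imprecision: the claim that ``the denominator stays uniformly close to $1$'' is not literally true for an arbitrary local representative $u$ of $g$; what you actually need---and what does follow by working in finitely many affine charts covering $K'$---is that the denominator stays bounded away from $0$ on $K'$, whence $\tau(g)\to\tau(g_0)=f_0$ uniformly there. Also, for the surjectivity of $H^1(M,\Z)\to H^1(K',\Z)$ you should take $K'$ to be a connected smoothly bounded Runge domain, so that a homology basis of $K'$ extends to one of $M$ and the restriction map on $H^1$ splits.
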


We start with some preparations. Throughout the paper, we shall denote by $|\cdot|$, $\dist(\cdot,\cdot)$, and $\length(\cdot)$ the standard Euclidean norm, distance, and length in $\R^d$, $d\in\N$. 
If $X$ is a topological space, then for a continuous map $f:X\to\R^d$ and a compact set $L\subset X$, we denote by $\|f\|_L=\sup_{x\in L}|f(x)|$ the maximum norm of $f$ on $L$. 
 We shall use the standard multiplicative notation for divisors; in particular, $(f)$ denotes the divisor of a nonzero meromorphic function $f$ on a subset of an open Riemann surface.
Recall that for a topological space $X$,  the space ${\rm Div}_m(X)$ of integral divisors of order $m\ge 0$ with support in $X$ is the quotient of the  $m$-fold Cartesian product $X^m=X\times \stackrel{m}{\ldots} \times X$ under the natural action of the group of permutations of $\{1,\ldots,m\}$; here $X^0=\{1\}$. The set ${\rm Div}(X)=\bigcup_{m\geq 0} {\rm Div}_m(X)$ is an abelian multiplicative group  in a natural way.

We expect  the following to have been observed before, but we do not  know a reference for it.
%
%
\begin{claim}\label{cl:mero}
Let $M$ be an open Riemann surface,  $L\subset M$ a smoothly bounded compact domain, $m\geq 0$, and $E_0\in {\rm Div}_m(\mathring L)$ a divisor. Then, for any $\epsilon>0$ there is a neighborhood $W$ of $E_0$ in ${\rm Div}_m(\mathring L)$ satisfying  that  for any $E\in W$ there is a meromorphic function $\Psi_E$ on $L$ with divisor  $(\Psi_E)=E E_0^{-1}$ such that
 $\|\Psi_E-1\|_{b L}<\epsilon$.
\end{claim}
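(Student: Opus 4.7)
The plan is to build $\Psi_E$ as a product of simple meromorphic factors, one for each point of $E_0$ counted with multiplicity, using global holomorphic functions supplied by the Stein structure of $M$. No Mittag--Leffler correction, $\bar\partial$-equation, or period adjustment enters the argument.

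Write $E_0=\sum_{i=1}^k n_i p_i^0$ with $p_1^0,\ldots,p_k^0\in\mathring L$ pairwise distinct and $\sum_i n_i=m$. Since every open Riemann surface is Stein (Behnke--Stein), every divisor on $M$ is principal, so for each $i$ we can fix $f_i\in\Oscr(M)$ with $(f_i)=(p_i^0)$; that is, $f_i$ is holomorphic on $M$ with a simple zero at $p_i^0$ and no other zero. Pick pairwise disjoint coordinate disks $V_i\Subset\mathring L$ around $p_i^0$, small enough that $f_i|_{V_i}$ is injective (possible because $f_i'(p_i^0)\ne 0$). Set
\[
c_i:=\min_{bL}|f_i|>0,\qquad c_i':=\min_{L\setminus V_i}|f_i|>0,
\]
and choose in advance $\eta>0$ with $(1+\eta)^m-1<\epsilon$. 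By continuity of $f_i$ at $p_i^0$, there is a neighborhood $W$ of $E_0$ in ${\rm Div}_m(\mathring L)$ such that every $E\in W$ can be written as $E=\sum_{i,j}q_{i,j}$ with $q_{i,j}\in V_i$ and $|f_i(q_{i,j})|\le\min\{c_i',\,\eta c_i\}$ for all $(i,j)$.

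For such $E$ and each pair $(i,j)$, define
\[
\phi_{i,j}:=1-\frac{f_i(q_{i,j})}{f_i}\in\Mcal(L),\qquad \Psi_E:=\prod_{i,j}\phi_{i,j}.
\]
Each $\phi_{i,j}$ is holomorphic on $L\setminus\{p_i^0\}$ with a simple pole at $p_i^0$, and its zero locus in $L$ is the solution set of $f_i(q)=f_i(q_{i,j})$: the bound $|f_i(q_{i,j})|<c_i'$ excludes solutions in $L\setminus V_i$, while the injectivity of $f_i|_{V_i}$ singles out $q_{i,j}$ as the unique simple solution inside $V_i$. Hence $(\phi_{i,j})=(q_{i,j})(p_i^0)^{-1}$ on $L$, and multiplying divisors gives $(\Psi_E)=EE_0^{-1}$. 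Finally, $\|\phi_{i,j}-1\|_{bL}=|f_i(q_{i,j})|/c_i\le\eta$, so the elementary inequality $\bigl|\prod_\ell(1+\delta_\ell)-1\bigr|\le\prod_\ell(1+|\delta_\ell|)-1$ yields $\|\Psi_E-1\|_{bL}\le(1+\eta)^m-1<\epsilon$, completing the proof.

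The only substantive ingredient is the existence of each $f_i$ with the prescribed simple zero and no other zeros; this is a classical consequence of the Stein property of the open Riemann surface $M$. The rest of the argument is bookkeeping using continuity, the local biholomorphy of $f_i$ at its simple zero, and the positivity of $|f_i|$ on the compact set $L\setminus V_i$.
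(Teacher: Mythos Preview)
Your proof is correct and follows essentially the same approach as the paper: both construct $\Psi_E$ as a product of elementary factors of the form $(f_i-f_i(q))/f_i$ where $f_i$ has a simple zero at $p_i^0$ and no other zeros on $L$, and both control the divisor by local injectivity of $f_i$ near $p_i^0$ together with a lower bound for $|f_i|$ away from $p_i^0$. The only cosmetic differences are that you invoke the Stein property to take $f_i$ globally on $M$ (the paper works on $L$), and you make the boundary estimate explicit via $(1+\eta)^m-1<\epsilon$; one tiny slip is that your bound $|f_i(q_{i,j})|\le c_i'$ should be strict to guarantee no extraneous zeros on $L\setminus V_i$, which is trivially arranged by shrinking $W$.
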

Since $\C\setminus\{0\}$ is an Oka manifold (see \cite{Forstneric2017E} for a monograph in Oka theory), if $L$ is Runge\footnote{A compact set in an open Riemann surface is {\em Runge} if its complement has no relatively compact connected components.} in $M$ then the Runge approximation theorem with jet interpolation (see e.g. \cite[Theorem 1.13.3]{AlarconForstnericLopez2021Book}) enables us to assume that $\Psi_E$ is holomorphic and nowhere vanishing on $M\setminus{\rm supp}(EE_0^{-1})$, hence its divisor on $M$ still equals $EE_0^{-1}$.
%
%
\begin{proof} If $m=0$ then ${\rm Div}_m(\mathring L)=\{1\}$ and the conclusion of the claim is trivial.
Assume that $m>0$ and write $E_0=\prod_{j=1}^k p_j^{\nu_j}$, where $p_i\neq p_j$  if $i\neq j$, and $\nu_j>0$ for all $j$, and $\sum_{j=1}^k\nu_j=m$. For each $j\in \{1,\ldots,k\}$
we choose a holomorphic function $h_j$ on $L$ having a zero of order $1$ at $p_j$ and vanishing nowhere on $L\setminus\{p_j\}$.
 We can then take a smoothly bounded closed disc $D_j\subset \mathring L$ such that $p_j\in \mathring D_j$, 
 $h_j\colon D_j\to \c$ is injective, and   $h_j^{-1}(h_j(D_j))=D_j$.  Make sure  that $D_1,\ldots,D_k$ are pairwise disjoint.
For each $q\in \mathring D_j$ let $f_j[q]$ denote the meromorphic function on $L$ defined by
\[
f_j[q]=\frac{h_j- h_j(q)}{h_j},
\]
and observe that $(f_j[q])=q\, p_j^{-1}$.
Let  $W$ be the natural neighborhood of $E_0$ in ${\rm Div}_m(\mathring L)$ given by the projection of  $\mathring D_1^{\nu_1}\times\cdots\times \mathring D_k^{\nu_k}$, where  $D_j^{\nu_j}=D_j\times \stackrel{\nu_j}{\ldots} \times D_j$.  For each $E\in W$ write $E=E_1\cdots E_k$, where $E_j=\prod_{i=1}^{\nu_j}q_{i,j}\in {\rm Div}_{\nu_j}(\mathring D_j)$, $j=1,\ldots,k$, and denote by $\Psi_E$ the meromorphic function on $L$ given by
\[
	\Psi_E=\prod_{j=1}^k \prod_{i=1}^{\nu_j} f_j[q_{i,j}].
\]
 It is clear that $(\Psi_E)=EE_0^{-1}$. Finally, if  the discs $D_j$, $j=1,\ldots,k$, are chosen sufficiently small then  $\|\Psi_E-1\|_{b L}<\epsilon$ for all $E\in W$.
 \end{proof}
 %
 %

%
%
\begin{proof}[Proof of Proposition \ref{pr:CCP}]  
It is clear that $\pi_*$ is continuous, and it is known to be surjective; see, e.g., \cite[Proposition 5.4.4]{AlarconForstnericLopez2021Book}. Let us see that $\pi_*$ is open.
Fix  the Fubini-Study metric on $\c\P^{n-1}$ and recall that its induced distance function is given by the expression
\begin{equation}\label{eq:fubini}
\dist(z,w)=\arccos \Big( \frac{|z\cdot \overline w|}{|z| |w|}\Big) \quad \text{for all $z,w\in \c\P^{n-1}$,}
\end{equation}
where $z\cdot  w=\sum_{j=1}^n z_j  w_j$ for all $z,w\in \c^n$; see \cite[p.\ 116, Eq.\ (4.45) and (4.46)]{BengtssonZyczkowski2006}. (To to the computation in the right hand side of \eqref{eq:fubini}, one can choose $z,w\in\C^n\setminus\{0\}$ to be any representatives of the points $z,w\in\C\P^{n-1}$ in the left hand side, which justifies the abuse of notation.)
Recall that this metric induces the standard topology in $\C\P^{n-1}$.
Let $f=(f_1,\ldots,f_n)\in  \Oscr(M,\C^n_*)$, let $L\subset M$ be a connected, smoothly bounded compact domain, and fix $\epsilon>0$. Set
\begin{equation}\label{eq:U-neigh}	
	U=\big\{h\in \Oscr(M,\C^n_*)\colon \|h-f\|_L<\epsilon\big\}.
\end{equation}
To complete the proof it suffices to check that the set $\pi_*(U)$ is a neighborhood of $\pi_*(f)$ in $\Oscr(M,\CP^{n-1})$, which is seen by finding a number $\delta>0$ such that the set
\begin{equation}\label{eq:Vdelta}
V_\delta=\{G\in \Oscr(M,\CP^{n-1})\colon \sup_L\dist(G,\pi_*(f))<\delta\}
\end{equation}
 is contained in $\pi_*(U)$; recall that $ \Oscr(M,\C^n_*)$ and $\Oscr(M,\CP^{n-1})$ are both endowed with the compact-open topology.   We may assume that $f_1\colon M\to \c$ is not identically zero (otherwise the role of $f_1$ in the proof would be played by a different component function of $f$),   $L$ is Runge in $M$, and $b L \cap f_1^{-1}(0)=\varnothing$ (otherwise we replace $L$ by a larger domain satisfying these properties; recall that $f_1$ is holomorphic).  Since $\pi_*$ is surjective and  $\pi_*(\varphi g)=\pi_*(g)$ holds for every holomorphic function $\varphi:M\to\C_*=\C\setminus\{0\}$ and every $g\in \Oscr(M,\C^n_*)$, it then suffices to prove the following.
%
%
\begin{claim}\label{cl:Gauss} 
There exists a number $\delta>0$ satisfying that for any $g\in  \pi_*^{-1}(V_\delta)
$ there is a nowhere vanishing holomorphic function $\varphi_g$ on $M$ such that $\varphi_g g\in U$.
\end{claim}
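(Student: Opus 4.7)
\textbf{Proof plan for Claim~\ref{cl:Gauss}.}

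\emph{Strategy.} For $g=(g_1,\ldots,g_n)\in \pi_*^{-1}(V_\delta)$, the plan is to look for $\varphi_g$ in the factored form
\[
\varphi_g \,=\, \frac{f_1}{g_1}\,\Psi_E\,\Theta,
\]
where $\Psi_E$ and $\Theta$ are meromorphic functions on $M$ whose divisors together cancel that of the naive but only meromorphic multiplier $f_1/g_1$. The factor $\Psi_E$ will fix the divisor of $f_1/g_1$ inside $L$ via Claim~\ref{cl:mero}, while $\Theta$ will absorb the divisor contributions outside $L$. The heuristic is that, by projective closeness of $\pi\circ g$ to $\pi\circ f$ on $L$, the naive factor $f_1/g_1$ already yields $(f_1/g_1)\,g_j\approx f_j$ for each $j$, and the two correctors are designed to render $\varphi_g$ holomorphic and nowhere vanishing on $M$ while preserving this approximation on $L$.

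\emph{Construction of the correctors.} Set $E_0:=(f_1)_L\in\mathrm{Div}_m(\mathring L)$ with $m:=\deg E_0$, and $E:=(g_1)_L$. Fix a small $\eta>0$. Applying Claim~\ref{cl:mero} together with the Runge extension described in the paragraph following it, I obtain a neighbourhood $W_0$ of $E_0$ in $\mathrm{Div}_m(\mathring L)$ such that for every $E\in W_0$ there is a meromorphic $\Psi_E$ on $M$, holomorphic and nowhere vanishing outside $\supp(EE_0^{-1})\subset \mathring L$, with $(\Psi_E)=EE_0^{-1}$ and $\|\Psi_E-1\|_{bL}<\eta$. For $\Theta$, I would appeal to Weierstrass' theorem on the open Riemann surface $M$ to produce a meromorphic $\Theta_0$ on $M$ with divisor $(g_1)_{M\setminus L}-(f_1)_{M\setminus L}$; since $\Theta_0|_L$ is then holomorphic and nowhere vanishing on $L$, a standard approximation result for $\C_*$-valued holomorphic maps on Runge subdomains of open Riemann surfaces allows me to multiply $\Theta_0$ by some $\sigma\in\Oscr(M,\C_*)$ so that $\Theta:=\Theta_0\sigma$ satisfies $\|\Theta-1\|_L<\eta$.

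\emph{Choice of $\delta$: the Hurwitz step.} The main technical point is to pick $\delta>0$ so small that $E\in W_0$ whenever $g\in\pi_*^{-1}(V_\delta)$. Note that $(g_1)_L$ depends only on $G:=\pi\circ g$ (two lifts of $G$ differ by a nowhere vanishing holomorphic factor) and coincides with the intersection divisor of $G$ with the hyperplane $\{z_1=0\}\subset\cp^{n-1}$ on $L$. Because $bL\cap f_1^{-1}(0)=\varnothing$, I would cover $\supp E_0$ by finitely many pairwise disjoint smoothly bounded discs $D_1,\ldots,D_k\subset\mathring L$ outside which $f_1$ is bounded away from zero on $L$. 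Translating the Fubini--Study closeness of $G$ to $\pi\circ f$ into uniform closeness of quotients $G_1/G_j$ to $f_1/f_j$ in local affine charts of $\cp^{n-1}$ (for locally nonvanishing components $f_j$), Hurwitz's theorem then forces the zeros of $g_1$ on $L$ to lie in $D_1\cup\cdots\cup D_k$ with the same multiplicities as those of $f_1$, so $E\in W_0$ provided $\delta$ is small enough.

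\emph{Verification of the estimate, and main obstacle.} With $\varphi_g:=(f_1/g_1)\Psi_E\Theta$, a divisor computation on $M$ shows $\varphi_g\in \Oscr(M,\C_*)$, and each $\varphi_g g_j$ is holomorphic on $L$. On $bL$, where $f_1$ and $g_1$ do not vanish, the identity
\[
\varphi_g g_j-f_j \,=\, \Psi_E\Theta\bigl(f_1 g_j/g_1-f_j\bigr) \,+\, f_j\bigl(\Psi_E\Theta-1\bigr)
\]
splits the error into two summands: the first is $O(\delta)$ because $|g_j/g_1-f_j/f_1|$ is controlled by the Fubini--Study formula \eqref{eq:fubini}, and the second is $O(\eta)$ because of the bounds on $\Psi_E-1$ and $\Theta-1$ together with the boundedness of $\|f\|_{bL}$. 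The maximum modulus principle applied to the holomorphic function $\varphi_g g_j-f_j$ on $L$ then transfers these bounds from $bL$ to all of $L$, and suitable choices of $\eta$ and $\delta$ yield $\|\varphi_g g-f\|_L<\epsilon$, that is, $\varphi_g g\in U$. The most delicate ingredient is the Hurwitz-type continuity of $(g_1)_L$ in $G$; the remainder amounts to divisor bookkeeping and a routine maximum-modulus estimate.
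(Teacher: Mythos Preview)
Your argument is correct and shares the paper's core idea: take $\varphi_g\approx f_1/g_1$, correct its divisor on $L$ by the $\Psi_E$ of Claim~\ref{cl:mero}, and pass from $bL$ to $L$ by the maximum modulus principle. Two differences are worth noting. First, your factor $\Theta$ is superfluous: since membership in $U$ depends only on $\varphi_g|_L$, the paper simply builds $\varphi_g=\Psi_E\cdot f_1/g_1\in\Oscr(L,\C_*)$ on $L$, checks the estimate there, and globalizes to $\Oscr(M,\C_*)$ by a single Runge step at the very end (using that $\C_*$ is Oka and $L$ is Runge); there is no need to track the zeros of $g_1$ on $M\setminus L$ via Weierstrass, and the Oka/Runge step you invoke to produce $\sigma$ is essentially the same one the paper uses, just applied earlier and to a more complicated object. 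Second, for the Hurwitz step the paper avoids local affine charts by exhibiting a single continuous nowhere-vanishing function $\lambda=\frac{\overline g\cdot f}{|\overline g\cdot f|}\frac{|f|}{|g|}$ on $L$ with $\|\lambda g-f\|_L<2\|f\|_L\sin(\delta/2)$, read off directly from the Fubini--Study distance; then $\lambda g_1$ is uniformly close to $f_1$ on all of $L$ at once, so $g_1$ is nonvanishing on $L\setminus\mathring D$ and a winding-number comparison on each $bD_j$ yields $(g_1|_L)\in W$ without having to select, for each disc, an auxiliary locally nonvanishing component $f_{j'}$.
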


In order to prove the claim, we distinguish cases.

\noindent{\em Case 1.} Assume  that $f_1$ has some zeroes on $L$. Since $f_1$ vanishes nowhere on $bL$, we may write $(f_1|_{L})=\prod_{j=1}^k p_j^{\nu_j}\in {\rm Div}_m(\mathring L)$, where $p_i\neq p_j$ if $i\neq j$, $\nu_j>0$ for all $j$,  and $m=\sum_{j=1}^k \nu_j$. Thus, ${\rm supp}((f_1|_{L}))=\{p_1,\ldots,p_k\}\subset\mathring L$. Fix a number
\begin{equation}\label{eq:ep0}
	0<\epsilon_0<\min\big\{1\,,\,\frac{\epsilon}{3\|f\|_{bL}}\big\}.
\end{equation}
Claim \ref{cl:mero} furnishes us with a neighborhood $W$ of $(f_1|_{L})$ in ${\rm Div}_m(\mathring L)$  satisfying  that  for any divisor $E\in W$ there is a meromorphic function $\Psi_E$ on $L$  such that
 \begin{equation}\label{eq:claim}
 	(\Psi_E)=E(f_1|_{L})^{-1} \quad\text{and}\quad \|\Psi_E-1\|_{b L}<\epsilon_0.
\end{equation}
Let $D_j$ be a smoothly bounded  compact disc neighborhood of $p_j$ in $\mathring L$, $j=1,\ldots,k$. Choose the discs $D_1, \ldots,D_k$ to be  pairwise disjoint and so small that the projection of the set  $D_1^{\nu_1}\times \cdots \times D_k^{\nu_k}\subset (\mathring L)^m$ into 
 ${\rm Div}_m(\mathring L)$ is  contained in $W$. Call 
\begin{equation}\label{eq:D}
	D=\bigcup_{j=1}^k D_j \subset \mathring L.
\end{equation}
Fix a number $0<\delta<\pi/2$ to be specified later.  By \eqref{eq:fubini} and \eqref{eq:Vdelta},  we have that 
\[
 \frac{|g\cdot \overline f|}{|g| |f|}=\big\langle \frac{\overline g \cdot f}{|\overline g\cdot f|} \frac{g}{|g|},  \frac{f}{|f|}  \big\rangle >\cos \delta>0 \quad \text{everywhere on $L$ for all $g\in \pi_*^{-1}(V_\delta)$,}
\]
 where $\langle z,w\rangle=\Re (z\cdot \overline w)$ denotes the Euclidean scalar product in $\c^n\equiv \r^{2n}$; observe that $\frac{\overline g \cdot f}{|\overline g\cdot f|} \frac{g}{|g|}\cdot  \frac{\overline f}{|f|}$ is a positive real number.   Since $ \frac{\overline g \cdot f}{|\overline g\cdot f|} \frac{g}{|g|}$ and $\frac{f}{|f|}$ are unitary, the above inequality implies that
\[
\Big\| \frac{\overline g \cdot f}{|\overline g\cdot f|} \frac{g}{|g|} -\frac{f}{|f|}\Big\|_{L}<2 \sin(\delta/2),
\]
 and so 
\begin{equation}\label{eq:delta'}
\Big\| \frac{\overline g \cdot f}{|\overline g\cdot f|} \frac{|f|}{|g|}g -f\Big\|_{L}<2\|f\|_{L} \sin(\delta/2) \quad \text{for all $g\in \pi_*^{-1}(V_\delta)$.}
\end{equation} 
Since $f_1$ vanishes nowhere on the compact set $L\setminus \mathring D=L\setminus\bigcup_{j=1}^k\mathring D_j$ (see \eqref{eq:D}), equation  \eqref{eq:delta'} enables us to choose  $\delta>0$  so small  that 
 \begin{equation}\label{eq:g1D}
 \text{$g_1$ vanishes nowhere on  $L \setminus \mathring D$  for all  $g=(g_1,\ldots,g_n)\in \pi_*^{-1}(V_\delta)$.}
\end{equation} 
We can also assume in view of   \eqref{eq:delta'} and \eqref{eq:g1D}  that    $\delta>0$ is  so  small  that the planar curves  $\big(\frac{\overline g \cdot f}{|\overline g\cdot f|} \frac{|f|}{|g|}g_1\big) \circ \gamma$ and $f_1 \circ \gamma $ have the same  winding number about the origin for every component  $\gamma\in H_1(L \setminus \mathring D,\z)$ of $\partial D$ and    $g=(g_1,\ldots,g_n)\in \pi_*^{-1}(V_\delta)$. Since $\frac{\overline g \cdot f}{|\overline g\cdot f|} \frac{|f|}{|g|}$ does not vanish on $L$ and each component of $D$ is simply-connected, we have that  $\Big(\frac{\overline g \cdot f}{|\overline g\cdot f|} \frac{|f|}{|g|}g_1\Big)\circ \gamma $ and $g_1\circ \gamma $ have the same winding number about the origin as well.   
 This implies that  $g_1 $ and $f_1$ have the same number of zeroes, counted with multiplicity, in each component of $\mathring D$  for all  $g\in \pi_*^{-1}(V_\delta)$.  It follows that   $(g_1|_{L})\in W$, hence the meromorphic function $\phi_g=\Psi_{(g_1|_{L})}$ on $L$  satisfies 
 \begin{equation}\label{eq:phig}
(\phi_g)=(g_1|_{L})(f_1|_{L})^{-1}\quad\text{and}\quad  
 \|\phi_g-1\|_{b L}<\epsilon_0
\end{equation}
  for all  $g=(g_1,\ldots,g_n)\in \pi_*^{-1}(V_\delta)$; see \eqref{eq:ep0} and \eqref{eq:claim}. In particular, the function 
\begin{equation}\label{eq:varphig}
  	\varphi_g=  \phi_g \,\frac{f_1}{g_1}
\end{equation}
is holomorphic and vanishes nowhere on $L$. By \eqref{eq:delta'}   it is clear that
\[
\Big\| \frac{\overline g \cdot f}{|\overline g\cdot f|} \frac{|f|}{|g|}g_1 -f_1\Big\|_{L}<2\|f\|_{L} \sin(\delta/2) \quad \text{for all $g\in \pi_*^{-1}(V_\delta)$.}
\]
Taking into account that $f_1$ vanishes nowhere on the compact set $L\setminus \mathring D$, this inequality, \eqref{eq:delta'},  and  \eqref{eq:g1D} enable us to assume that $\delta>0$ is so small that
\[
 \Big\| \frac{g }{g_1}  - \frac{f }{f_1} \Big\|_{L \setminus \mathring D} <\frac{\epsilon}{3  \|f_1\|_{L \setminus \mathring D} } \quad \text{for all $g\in \pi_*^{-1}(V_\delta)$},
 \]
and hence
 \begin{equation}\label{eq:e2}
\Big\| \frac{f_1 }{g_1}g  -f\Big\|_{L \setminus \mathring D}\leq \|f_1\|_{L \setminus \mathring D} \Big\| \frac{g }{g_1}  - \frac{f }{f_1} \Big\|_{L \setminus \mathring D} <\frac{\epsilon}3 \quad \text{for all $g\in \pi_*^{-1}(V_\delta)$.}
 \end{equation}
It then follows from the maximum modulus principle, \eqref{eq:ep0}, \eqref{eq:phig}, \eqref{eq:varphig}, and \eqref{eq:e2} that
\begin{eqnarray*}
\left\| \varphi_g g  -f\right\|_{L } & \leq &  \left\| \varphi_g g  -f\right\|_{bL }
\\
& \le &  \|\phi_g-1\|_{bL }\Big( \Big\| \frac{f_1}{g_1}g-f \Big\|_{bL } + \|f\|_{bL } \Big) +  \Big\| \frac{f_1}{g_1}g-f \Big\|_{bL }
\\
& < & \epsilon_0\big(\frac{\epsilon}3+\|f\|_{bL }\big)+\frac{\epsilon}3 \; < \; \epsilon\quad
\text{for all $g\in \pi_*^{-1}(V_\delta)$.}
\end{eqnarray*}
Finally, since $\C_*$ is an Oka manifold and $L \subset M$ is a Runge compact subset, the Runge approximation theorem (see e.g. \cite[Theorem 1.13.3]{AlarconForstnericLopez2021Book}) enables us to assume that the function $\varphi_g$ is holomorphic and nowhere vanishing on $M$, and hence $\varphi_g g\in U$ for all $g\in \pi_*^{-1}(V_\delta)$; see \eqref{eq:U-neigh} and \eqref{eq:Vdelta}. This shows Claim \ref{cl:Gauss} in the case when $f_1$ has some zeroes in $L$.

\noindent{\em Case 2.} Assume that $f_1$ vanishes nowhere on $L$. A simplification of the arguments in Case 1 then provides a number $\delta>0$ such that $g_1$ vanishes nowhere on $L$ and $\|\frac{f_1}{g_1}g-f\|_{L}<\epsilon$ for all $g=(g_1,\ldots,g_n)\in \pi_*^{-1}(V_\delta)$; use \eqref{eq:delta'} and cf.\ \eqref{eq:e2}. By Runge's theorem, we may approximate $f_1/g_1$ uniformly on $L$ by a holomorphic function $\varphi_g:M\to\C_*$. If the approximation is close enough, then $\|\varphi_gg-f\|_{L}<\epsilon$ and hence $\varphi_gg\in U$ for all $g\in \pi_*^{-1}(V_\delta)$; see \eqref{eq:U-neigh} and \eqref{eq:Vdelta}. This completes the proof of Claim \ref{cl:Gauss}, thereby proving Proposition \ref{pr:CCP}.
\end{proof}

%
%
\begin{corollary}\label{co:Gauss}
Let $M$ be an open Riemann surface and $n\ge 3$ an integer. Then the map  
\[
	\pi_*:  \Oscr(M,{\bf A}_*^{n-1})\to  \Oscr(M,{\bf Q}^{n-2})
\]
given by \eqref{eq:pi*}
is continuous, open, and surjective, hence a quotient map.
\end{corollary}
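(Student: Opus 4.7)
The plan is to deduce Corollary \ref{co:Gauss} directly from Proposition \ref{pr:CCP} by a purely formal restriction argument, once we observe that $\Oscr(M,{\bf A}_*^{n-1})$ embeds into $\Oscr(M,\C^n_*)$ exactly as the preimage of $\Oscr(M,{\bf Q}^{n-2})$ under the ambient projection map.

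First I would note that the quadratic form $Q(z)=z_1^2+\cdots+z_n^2$ is homogeneous of degree two, so the condition $Q(z)=0$ is invariant under the $\C^*$-action on $\C^n_*$. Consequently, for any $f\in\Oscr(M,\C^n_*)$ one has $f\in \Oscr(M,{\bf A}_*^{n-1})$ if and only if $\pi\circ f\in\Oscr(M,{\bf Q}^{n-2})$. Writing $\wt\pi_*\colon\Oscr(M,\C^n_*)\to\Oscr(M,\C\P^{n-1})$ for the map of Proposition \ref{pr:CCP}, this means
\[
\Oscr(M,{\bf A}_*^{n-1})=\wt\pi_*^{-1}\big(\Oscr(M,{\bf Q}^{n-2})\big),
\]
and the map $\pi_*$ of the corollary is simply the restriction of $\wt\pi_*$ to this preimage (with the subspace topologies). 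Continuity then comes for free. For surjectivity, given any $G\in\Oscr(M,{\bf Q}^{n-2})$, Proposition \ref{pr:CCP} furnishes a holomorphic lift $f\in\Oscr(M,\C^n_*)$ with $\wt\pi_*(f)=G$; by the scale-invariance observation above, $f$ automatically lands in ${\bf A}_*^{n-1}$, so $\pi_*(f)=G$.

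For openness, I would invoke the following elementary topological lemma: if $h\colon X\to Y$ is continuous, open, and surjective, and $B\subset Y$ has preimage $A:=h^{-1}(B)$, then the restriction $h|_A\colon A\to B$ is continuous, open, and surjective for the subspace topologies. Indeed, a relatively open $V\subset A$ has the form $V=W\cap A$ for some open $W\subset X$, and a short chase shows $h(V)=h(W)\cap B$, which is open in $B$ by openness of $h$. Applying this lemma with $h=\wt\pi_*$ and $B=\Oscr(M,{\bf Q}^{n-2})$ yields that $\pi_*$ is open. A continuous, open, surjective map is always a quotient map, which completes the argument.

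The corollary is essentially free: the substantive work, namely constructing the local holomorphic rescalings needed to witness openness of $\wt\pi_*$, was already carried out in the proof of Proposition \ref{pr:CCP}. There is no genuine obstacle here, only the verification that the null-quadric condition $Q=0$ descends well through the $\C^*$-action, which is immediate.
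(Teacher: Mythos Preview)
Your argument is correct and follows essentially the same approach as the paper: both observe that $\Oscr(M,{\bf A}_*^{n-1})$ is the full preimage (a saturated subset) of $\Oscr(M,{\bf Q}^{n-2})$ under the ambient map $\wt\pi_*$, and then conclude by the elementary fact that restricting a continuous open surjection to a saturated subset preserves all three properties. The paper states this in one line via the chain of equalities $\pi_*^{-1}(\Oscr(M,{\bf Q}^{n-2}))=\Oscr(M,{\bf A}_*^{n-1})=\pi^{-1}_*(\pi_*(\Oscr(M,{\bf A}_*^{n-1})))$, while you spell out the underlying lemma explicitly; the content is the same.
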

\begin{proof}
We know that $\pi_*:\Oscr(M,\C^n_*)\to  \Oscr(M,\CP^{n-1})$ is continuous, open, and surjective by Proposition \ref{pr:CCP}. Since 
\[
\pi_*^{-1}(\Oscr(M,{\bf Q}^{n-2}))=\Oscr(M,{\bf A}_*^{n-1})=\pi^{-1}_*\big(\pi_*(\Oscr(M,{\bf A}_*^{n-1}))\big),
\]
 it follows that the restricted map $\pi_*:  \Oscr(M,{\bf A}_*^{n-1})\to  \Oscr(M,{\bf Q}^{n-2})$ is continuous, open, and surjective as well.
\end{proof}


\section{Completion of the proof of Theorem \ref{th:Gauss-F}}\label{sec:Gauss}
%
%
\noindent
Fix a holomorphic $1$-form $\theta$ on $M$ vanishing nowhere (such exists by \cite{GunningNarasimhan1967MA}, see also \cite[Theorem 1.10.5]{AlarconForstnericLopez2021Book}). The Gauss map assignment $\Gscr:\CMI_{\rm full}(M,\R^n)\to \Oscr_{\rm full}(M,{\bf Q}^{n-2})$ factorizes as $\Gscr=\pi_*\circ \Dscr$, where $\pi_*:\Oscr_{\rm full}(M,{\bf A}_*^{n-1})\to  \Oscr_{\rm full}(M,{\bf Q}^{n-2})$ and $\Dscr: \CMI_{\rm full}(M,\R^n)\to \Oscr_{\rm full}(M,{\bf A}_*^{n-1})$ are the restrictions of the maps in \eqref{eq:pi*} and \eqref{eq:Psi(u)}, respectively. 
We then have the   diagram in \eqref{eq:diagram*}. It trivially follows from Corollary \ref{co:Gauss} that $\pi_*\times {\rm Id}$ is open. To complete the proof of Theorem \ref{th:Gauss-F} we establish the following result.
%
%
\begin{proposition}\label{pr:Psi}
Let $M$ be an open Riemann surface, $\theta$ a holomorphic $1$-form vanishing nowhere on $M$, and $n\ge 3$ an integer.
Then for any $u_0\in  \CMI_{\rm full}(M,\R^n)$ and any neighborhood $U$ of $u_0$ in $\CMI_{\rm full}(M,\R^n)$, there exist neighborhoods $V$ of $\Dscr(u_0)=\partial u_0/\theta$ in $\Oscr(M,{\bf A}_*^{n-1})$  and $W$ of $ \Flux(u_0)$ in $H^1(M,\r^n)$ such that
 $\pi_*(V)\times W\subset (\Gscr,\Flux)(U)$.
\end{proposition}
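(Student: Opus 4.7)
The plan is to solve the period problem around $u_0$ by means of a period-dominating spray of nowhere vanishing multipliers combined with an implicit function argument. Set $f_0=\Dscr(u_0)=\partial u_0/\theta\in\Oscr_{\rm full}(M,{\bf A}_*^{n-1})$. We may assume $U=\{u\in\CMI_{\rm full}(M,\R^n):\|u-u_0\|_K<\epsilon\}$ for a compact $K\subset M$ and some $\epsilon>0$, and we enlarge $K$ to a smoothly bounded, Runge, compact domain $L\subset M$ carrying a smooth homology basis $C_1,\ldots,C_\ell$ of $M$. The period map $\Pcal\colon \Oscr(M,{\bf A}_*^{n-1})\to(\C^n)^\ell$, $\Pcal(f)=\bigl(\int_{C_j}f\theta\bigr)_{j=1}^\ell$, is continuous, and for a nowhere vanishing holomorphic $\varphi\colon M\to \C_*$ the $1$-form $\varphi f\theta$ integrates to an element of $\CMI(M,\R^n)$ exactly when $\Re\Pcal(\varphi f)=0$; the resulting flux is then $\Flux(u)([C_j])=-2\imath\int_{C_j}\varphi f\theta$, so the target period is $\imath\,\Flux(u)([C_j])/2$.

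The crux is the existence of a period-dominating spray at $f_0$. Since $f_0$ is full, the standard construction in \cite[Chapter 3]{AlarconForstnericLopez2021Book} yields holomorphic functions $h_1,\ldots,h_N$ on $M$ such that the $\C$-linear map
\[
\C^N\ni s=(s_1,\ldots,s_N)\longmapsto \Bigl(\int_{C_j}\bigl(\textstyle\sum_k s_k h_k\bigr)f_0\theta\Bigr)_{j=1}^\ell\in(\C^n)^\ell
\]
is surjective. Let $\varphi_t=\exp\bigl(\sum_k t_k h_k\bigr)$ for $t\in\C^N$, a holomorphic family of nowhere vanishing holomorphic functions on $M$ with $\varphi_0\equiv 1$, and define
\[
\Phi\colon \Oscr(M,{\bf A}_*^{n-1})\times \C^N\lra (\C^n)^\ell,\qquad \Phi(f,t)=\Pcal(\varphi_t f),
\]
which is continuous and satisfies that $\partial_t\Phi(f_0,0)$ is the above surjection. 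A standard implicit function argument with finite-dimensional range (fix a $\C$-linear right inverse of $\partial_t\Phi(f_0,0)$ and run a Banach contraction in $\C^N$, using continuity of $\Phi$ on compact sets) yields open neighborhoods $V\subset \Oscr_{\rm full}(M,{\bf A}_*^{n-1})$ of $f_0$ and $Q\subset(\C^n)^\ell$ of $\Pcal(f_0)$ and a continuous map $t\colon V\times Q\to \C^N$ with $t(f_0,\Pcal(f_0))=0$ and $\Phi(f,t(f,\mathfrak{q}))=\mathfrak{q}$ for all $(f,\mathfrak{q})\in V\times Q$.

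Set $W=\{\mathfrak{p}\in H^1(M,\r^n):\bigl(\imath\,\mathfrak{p}([C_j])/2\bigr)_{j=1}^\ell\in Q\}$, an open neighborhood of $\Flux(u_0)$ in $H^1(M,\r^n)$ (since $\Pcal(f_0)=\imath\,\Flux(u_0)/2$). Shrinking $V$ and $W$ further, we may assume that for every $(f,\mathfrak{p})\in V\times W$ the multiplier $\varphi=\varphi_{t(f,\mathfrak{q}(\mathfrak{p}))}$, with $\mathfrak{q}(\mathfrak{p})=(\imath\,\mathfrak{p}([C_j])/2)_j$, is so close to $1$ on $L$ that $\varphi f$ is arbitrarily close to $f_0$ on $L$. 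Fix $p_0\in K$ and define
\[
u(p)=u_0(p_0)+2\Re\int_{p_0}^{p}\varphi f\theta,\qquad p\in M.
\]
Since $\Pcal(\varphi f)=\mathfrak{q}(\mathfrak{p})\in\imath(\R^n)^\ell$ has vanishing real part, $u$ is single-valued on $M$, $\partial u=\varphi f\theta$, and $\Dscr(u)=\varphi f$. Hence $\Gscr(u)=\pi_*(\varphi f)=\pi_* f$ (because $\varphi$ is a nowhere vanishing scalar) and $\Flux(u)([C_j])=-2\imath\int_{C_j}\varphi f\theta=\mathfrak{p}([C_j])$, giving $\Flux(u)=\mathfrak{p}$. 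Fullness of $f_0$ together with openness of fullness ensures $u$ is full, and the uniform closeness of $\varphi f$ to $f_0$ on $L\supset K$ gives $\|u-u_0\|_K<\epsilon$, so $u\in U$. Consequently $\pi_*(V)\times W\subset (\Gscr,\Flux)(U)$, as required.

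The principal obstacle is the surjectivity of $\partial_t\Phi(f_0,0)$, which depends entirely on the fullness of $f_0$: if $f_0(M)$ were contained in a hyperplane $\{z\cdot a=0\}\subset\C^n$ then $a\cdot\Pcal(\varphi_t f_0)\equiv 0$ for every $t$, and no period-dominating spray of this form could exist. This is precisely why the proposition (and Theorem \ref{th:Gauss-F}) is restricted to full immersions, matching the authors' remark that the non-full case presents genuine technical difficulties for this approach.
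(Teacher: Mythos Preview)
Your approach---a period-dominating spray of exponential multipliers $\varphi_t=\exp(\sum_k t_k h_k)$ followed by an implicit-function argument---is exactly the one the paper uses. The gap is in the sentence ``we enlarge $K$ to a smoothly bounded, Runge, compact domain $L\subset M$ carrying a smooth homology basis $C_1,\ldots,C_\ell$ of $M$.'' This is only possible when $M$ has finite topology; for a general open Riemann surface $H_1(M,\Z)$ may be infinitely generated, and no compact $L$ contains a homology basis. Consequently, having arranged $\Pcal(\varphi f)=\mathfrak q(\mathfrak p)$ on your curves $C_1,\ldots,C_\ell$ does not imply that $\Re(\varphi f\theta)$ is exact on all of $M$, so your $u$ need not be single-valued; and even if it were, its flux on loops not homologous into $L$ is uncontrolled, so $\Flux(u)=\mathfrak p$ is not established.

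The paper deals with this in two stages. First one takes $C_1,\ldots,C_l$ to be a basis only of $H_1(L,\Z)$ and uses the spray, exactly as you do, to obtain $\hat f=\Xi(\zeta_{f,F},\cdot)f$ close to $f_0$ on $L$ with $\int_{C_j}\hat f\theta=\imath F(C_j)$ for $j=1,\ldots,l$. Then one invokes \cite[Theorem 4.1]{AlarconForstnericLopez2019JGA}, which furnishes a nowhere vanishing $\varphi\in\Oscr(M)$, as close to $1$ on $L$ as one likes, with $\int_C\varphi\hat f\theta=\imath F(C)$ for \emph{every} closed curve $C$ in $M$. This global period-correction step is precisely what your argument is missing; once it is inserted after your implicit-function step (with $C_1,\ldots,C_\ell$ reinterpreted as a basis of $H_1(L,\Z)$), the remainder of your proof goes through verbatim.
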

\begin{proof}
Let $u_0\in U$ be as in the statement.
Let $L\subset M$ be a connected smoothly bounded Runge compact domain and  $\delta>0$ such that
\begin{equation}\label{eq:Udelta}
	U_\delta:=\{u\in  \CMI_{\rm full}(M,\R^n) : \|u-u_0\|_L<\delta\}\subset U.
\end{equation}
For any $\epsilon>0$, consider the neighborhood of $\Dscr(u_0)$ in $\Oscr_{\rm full}(M,{\bf A}_*^{n-1})$ given by 
\[
	V_\epsilon=\{f\in  \Oscr_{\rm full}(M,{\bf A}_*^{n-1}) : \|f-\Dscr(u_0)\|_L<\epsilon\}.
\]
Fix a basis $\Bcal=\{C_1,\ldots,C_l\}$ of the homology group $H_1(L,\z)$, and note that  $L$ being Runge implies that  $\Bcal$ lies in a homology basis of $M$. For any $\sigma>0$, consider the  neighborhood of $\Flux(u_0)$ in $H^1(M,\r^n)$ given by 
\[
	W_\sigma=\{F\in H^1(M,\r^n) : \|F-\Flux(u_0)\|_\Bcal<\sigma\}.
\]
To complete the proof, it suffices to find $\epsilon,\sigma>0$ so small that 
\begin{equation}\label{eq:piV}
\pi_*(V_\epsilon)\times W_\sigma\subset (\Gscr,\Flux)(U_\delta).
\end{equation}
The idea for this is to take the numbers $\epsilon, \sigma>0$ so small that for any $f\in V_\epsilon$ and $F\in W_\sigma$, there is a holomorphic {\em multiplier} $h\colon M\to \c_*$ such that the periods  of $h f \theta$ equal $\imath F$  on $\Bcal$. For this we shall apply the method of period-dominating sprays. In a second step, we shall invoke \cite[Theorem 4.1]{AlarconForstnericLopez2019JGA} (see also \cite[Theorem 5.4.1]{AlarconForstnericLopez2021Book}) to obtain another multiplier $\varphi: M\to \c_*$ so that  $\varphi h f \theta$ has no real periods and its imaginary periods equal $F$ on $\Bcal$. Granting that $h$ and $\varphi$ are close enough to $1$ on $L$, the $1$-form $\varphi h f \theta$ will integrate by the Weierstrass formula in \eqref{eq:conversely} to a conformal minimal immersion in $U_\delta$ with the Gauss map $\pi_*(f)$ and the flux $F$.

Fix $p_0\in \mathring L$. Choose  $\mu>0$  so small that if $f\in V_\mu$ and $\Re(f \theta)$ is exact on $L$ then 
\begin{equation}\label{eq:delta2}
\Big|u_0(p)-u_0(p_0)-\Re \int_{p_0}^p f\theta\Big|<\delta \quad \text{for all $p\in L$.}
\end{equation}
For each $f\in  \Oscr_{\rm full}(M,{\bf A}_*^{n-1})$   define the period map associated to $(\Bcal,f,\theta)$ as the map $\Pcal^f=\big(\Pcal^f_1,\ldots,\Pcal_l^f\big)\colon \Oscr(M)\to \c^n$ given by
\[
\Pcal_j^f(h)=\int_{C_j} hf \theta, \quad h\in \Oscr(M),\quad j=1,\ldots,l,
\]
where, as customary, $\Oscr(M)$ is the space of holomorphic functions on $M$.

By the same ideas in \cite[Lemma 3.2]{AlarconForstnericLopez2019JGA} or \cite[Lemma 5.1.2]{AlarconForstnericLopez2021Book}, there exist finitely many holomorphic functions $g_1,\ldots,g_N$, $N\geq nl$, on $M$ such that the function $\Xi\colon \c^N\times M\to \c_*$ given by
\[
\Xi(\zeta,p)=\prod_{i=1}^Ne^{\zeta_i g_i(p)},\quad \zeta=(\zeta_1,\ldots,\zeta_N)\in \c^N,\quad p\in M,
\]
is a period dominating multiplier of $\Dscr(u_0)$, meaning that the map
\[
\c^N\ni \zeta \longmapsto \Pcal^{\Dscr(u_0)}(\Xi(\zeta,\cdot))\in (\c^n)^l
\]
has maximal rank equal to $ln$ at $\zeta=0$. Since $\Xi(0,\cdot)=1$, there is $\epsilon_1>0$ such that 
\[
\{\Xi(\zeta,\cdot) \Dscr(u_0)\colon \zeta\in \epsilon_1\overline \b_0^N \}  \subset V_{\mu/2}.
\]
Moreover, since $\Xi$ is period dominating  there is $\sigma>0$ such that
\[
\Pcal^{\Dscr(u_0)}(1)+  \sigma\overline{\b}_0^{nl}\subseteq  \{ \Pcal^{\Dscr(u_0)}(\Xi(\zeta,\cdot))\colon \zeta\in \epsilon_1\b_0^N \}.
\]
Since $ \Pcal^f$ depends analytically on $f\in  \Oscr_{\rm full}(M,{\bf A}_*^{n-1})$, there is   $\epsilon>0$  such that the following conditions hold for each $f\in V_{\epsilon}$:
\begin{itemize}
\item The map $\c^N\ni \zeta \mapsto \Pcal^{f}(\Xi(\zeta,\cdot))\in (\c^n)^l$ has rank equal to $ln$ at $\zeta=0$.
\smallskip
\item $\{ \Xi(\zeta,\cdot) f\colon \zeta\in \epsilon_1\overline \b_0^N \}  \subset V_\mu$.
\smallskip
\item $\Pcal^{\Dscr(u_0)}(1)+\sigma\overline{\b}_0^{nl}\subseteq  \{ \Pcal^{f}(\Xi(\zeta,\cdot))\colon \zeta\in \epsilon_1\b_0^N \}$.
\end{itemize}
Let $f\in V_\epsilon$ and $F\in W_\sigma$, and choose  $\zeta_{f,F}\in  \epsilon_1\b_0^N$ such that 
\[
 \Pcal^{f}(\Xi(\zeta_{f,F},\cdot))=  \imath (F(C_1),\ldots,F(C_l));
 \]
 note that $\Re(\Pcal^{\Dscr(u_0)}(1))=0$ since  the real part of $\Dscr(u_0) \theta=\partial u_0$ is exact on $M$.    It turns out that 
$\hat f:=\Xi(\zeta_{f,F},\cdot) f \in V_\mu $ and $\int_{C_j} \hat f \theta=\imath F(C_j)$, $j=1\ldots,l$; in particular, $\Re(\hat f \theta)$ is exact on $L$. 
By \cite[Theorem 4.1]{AlarconForstnericLopez2019JGA} (see also \cite[Theorem 5.4.1]{AlarconForstnericLopez2021Book}) there exists a nowhere vanishing $\varphi\in \Oscr(M)$  such that 
\begin{equation}\label{eq:fluxF}
\int_C \varphi \hat f \theta =\imath F(C)\quad \text{for every loop $C$ in $M$}
\end{equation}
 and  $\varphi$  is so  close to $1$ on $L$ that  $\varphi \hat f \in V_\mu$. 
In particular, $\Re (\varphi \hat f \theta)$ is exact on $M$. Let $\hat u\in  \CMI_{\rm full}(M,\R^n)$ be given by
\[
 \hat u(p)=u_0(p_0)+\Re\int_{p_0}^p \varphi \hat f \theta;
\]
see \eqref{eq:conversely}. By \eqref{eq:delta2} we have that $\|\hat u-u_0\|_L<\delta$, and hence $\hat u\in U_\delta$. Since $\Gscr (\hat u)= \pi_*(\Dscr(\hat u))=\pi_*(\varphi \hat f)=\pi_*(\hat f)=\pi_*(f)$, we have that $\pi_*(f)\in \Gscr (U_\delta)$. Moreover,   \eqref{eq:fluxF} ensures that $\Flux(\hat u)=F$, which implies  \eqref{eq:piV} and completes the proof.
\end{proof}

%
%
\begin{proof}[Proof of Theorem \ref{th:Gauss-F}]
Let $U\subset \CMI_{\rm full}(M,\R^n)$ be an open subset and fix $u\in U$. By Proposition \ref{pr:Psi} there exist neighborhoods $V$ of $\Dscr(u)$ in $\Oscr(M,{\bf A}_*^{n-1})$ and $W$ of $\Flux(u)$ in $H^1(M,\r^n)$ 
 such that $\pi_*(V)\times W\subset (\Gscr,\Flux)(U)$.
By Corollary \ref{co:Gauss}, $\pi_*(V)\subset \Oscr_{\rm full}(M,{\bf Q}^{n-2})$ is open, and hence $(\Gscr,\Flux)(U)$ is a neighborhood of $(\Gscr(u),\Flux(u))$ in $\Oscr_{\rm full}(M,{\bf Q}^{n-2})\times H^1(M,\r^n)$. This shows that $(\Gscr,\Flux)(U)$ is open in $\Oscr_{\rm full}(M,{\bf Q}^{n-2})\times H^1(M,\r^n)$, and so the map in \eqref{eq:G-F} is open. Since it is also continuous and surjective (see Section \ref{ss:TGMA}), this completes the proof.
\end{proof}
%
%
\begin{remark}\label{re:G}
A consequence of Theorem \ref{th:Gauss-F} is that, for any fixed $F\in H^1(M,\R^n)$, the Gauss map assignment $\Gscr:\CMI_{\rm full}^F(M,\R^n)\to\Oscr_{\rm full}(M,{\bf Q}^{n-2})$, restricted to the subspace $\CMI_{\rm full}^F(M,\R^n)\subset \CMI_{\rm full}(M,\R^n)$ of conformal minimal immersions with the flux map $F$, is still an open quotient map. 
In particular, the Gauss map assignment for full holomorphic null curves ${\rm NC}_{\rm full}(M,\C^n)\to\Oscr_{\rm full}(M,{\bf Q}^{n-2})$, sending  each $x\in {\rm NC}_{\rm full}(M,\C^n)$ to $[dx]$, is an open quotient map. (See  \cite[Definition 2.3.3 and Section 3.1]{AlarconForstnericLopez2021Book} for notation and the notion of a null curve.)
\end{remark}


\section{On the Osserman curvature estimate}\label{sec:Gmeagre}

%
%
\subsection{Proof of Theorem \ref{th:Gmeagre}}\label{sec:proofT-2}
 
We obtain Theorem \ref{th:Gmeagre} as an immediate consequence of the following slightly more precise result. 
Recall that a set in a topological space is a $G_\delta$ if it is a countable intersection of open sets. So, a set in a Baire space is residual if and only if it contains a dense $G_\delta$ set.
\begin{theorem}\label{th:residual}
If $M$ is an open Riemann surface, $n\ge 3$ is an integer, and $p_0\in M$, then 
$\{G\in  \Oscr^{\neg K,p_0}_{\rm full}(M,{\bf Q}^{n-2})\colon dG_{p_0}\neq 0\big\}$
is a dense $G_\delta$ set 
in $\Oscr(M,{\bf Q}^{n-2})$.
\end{theorem}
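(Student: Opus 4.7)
The plan is to express the set in question as a countable intersection of open dense subsets of $\Oscr(M,{\bf Q}^{n-2})$ and conclude by the Baire category theorem. For each integer $m\ge 1$, I would define
\[
A_m:=\bigl\{G\in \Oscr_{\rm full}(M,{\bf Q}^{n-2})\colon dG_{p_0}\neq 0,\ \exists\, u\in \Gscr^{-1}(G) \text{ with } K_u(p_0)=-1 \text{ and } d_u(p_0)>m\bigr\}.
\]
Scale invariance of the quantity $|K_u(p)|\,d_u(p)^2$ under $u\mapsto \lambda u$, $\lambda>0$, combined with the invariance $\Gscr(\lambda u)=\Gscr(u)$, shows that
\[
\bigcap_{m\ge 1}A_m=\bigl\{G\in \Oscr^{\neg K,p_0}_{\rm full}(M,{\bf Q}^{n-2})\colon dG_{p_0}\neq 0\bigr\},
\]
so the task reduces to showing each $A_m$ is open and dense in $\Oscr(M,{\bf Q}^{n-2})$.

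For openness, given $G\in A_m$ with witness $u\in \Gscr^{-1}(G)$, I would first fix a smoothly bounded Runge compact domain $L\subset M$ containing $p_0$ in its interior and a number $m''\in(m,d_u(p_0))$ such that every path in $L$ from $p_0$ to $bL$ has $u$-intrinsic length $>m''$. Cauchy estimates and continuity of the Gauss curvature then furnish a neighborhood $\Ucal$ of $u$ in $\CMI_{\rm full}(M,\R^n)$ such that for every $u'\in \Ucal$ the value $K_{u'}(p_0)$ is close to $-1$ and every path in $L$ from $p_0$ to $bL$ has $u'$-length $>m$; the rescaling $v=\sqrt{|K_{u'}(p_0)|}\,u'$ then serves as a witness for $\Gscr(u')\in A_m$. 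By Theorem \ref{th:Gauss-F}, the image $\Gscr(\Ucal)$ is open in $\Oscr_{\rm full}(M,{\bf Q}^{n-2})$, hence open in $\Oscr(M,{\bf Q}^{n-2})$, and the above shows $\Gscr(\Ucal)\subset A_m$.

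For density, given $G_0\in \Oscr(M,{\bf Q}^{n-2})$ and an open neighborhood $V$ of $G_0$, I would perform three successive approximations. First, using that ${\bf Q}^{n-2}$ is an Oka manifold, approximate $G_0$ by a full map $G_1\in V$ satisfying $dG_1|_{p_0}\neq 0$; the condition is open, and density can be arranged by an Oka-Runge modification on a small disc centred at $p_0$. Second, invoke the surjectivity of $\Gscr:\CMI_{\rm full}(M,\R^n)\to \Oscr_{\rm full}(M,{\bf Q}^{n-2})$ from \cite[Theorem 1.1]{AlarconForstnericLopez2019JGA} to produce a full conformal minimal immersion $u_0$ with $\Gscr(u_0)=G_1$; necessarily $K_{u_0}(p_0)\neq 0$. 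Third, invoke the density of complete full conformal minimal immersions in $\CMI(M,\R^n)$ from \cite[Theorem 1.2]{AlarconLopez2024-2} to approximate $u_0$ by a complete full immersion $u$. If $u$ is close enough to $u_0$, continuity of $\Gscr$ and of the Gauss curvature at $p_0$ give $\Gscr(u)\in V$ and $K_u(p_0)\neq 0$, and rescaling $u$ by $\sqrt{|K_u(p_0)|}$ produces a witness $v$ with $K_v(p_0)=-1$, $d_v(p_0)=+\infty$, and $\Gscr(v)=\Gscr(u)\in V\cap A_m$.

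The main obstacle is the density step, which requires threading together three distinct approximation schemes while preserving all the relevant conditions: the Oka-theoretic modification in $\Oscr(M,{\bf Q}^{n-2})$ to force $dG_{p_0}\neq 0$, the lift to $\CMI_{\rm full}(M,\R^n)$ via surjectivity of $\Gscr$, and finally a global deformation within $\CMI(M,\R^n)$ to reach a complete immersion without disturbing either the noncriticality at $p_0$ or the proximity in $\Oscr(M,{\bf Q}^{n-2})$ to $G_0$. A subtler point in the openness step is that $d_u(p_0)$ is only lower semicontinuous in $u$ in the compact-open topology; the use of the infimum of intrinsic lengths of paths joining $p_0$ to $bL$ for a large compact $L$ provides a continuous surrogate that circumvents this issue.
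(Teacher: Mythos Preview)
Your proposal is correct and follows essentially the same approach as the paper: both express the target set as a countable intersection of sets of the form $\Gscr(\Omega_i)$ with $\Omega_i=\{u:|K_u(p_0)|\,d_u(p_0)^2>i\}$ (your $A_m$ equals $\Gscr(\Omega_{m^2})$ by the scaling you invoke), prove openness via Theorem~\ref{th:Gauss-F}, and prove density via the density of complete full immersions in $\CMI(M,\R^n)$. The only cosmetic differences are that the paper works upstairs in $\CMI_{\rm full}(M,\R^n)$ throughout and cites \cite{AlarconCastro-Infantes2019APDE} rather than \cite{AlarconLopez2024-2} for the completeness step; also, in your openness argument the rescaling $v=\sqrt{|K_{u'}(p_0)|}\,u'$ requires keeping a cushion (e.g.\ $u'$-length $>m'$ for some $m'\in(m,m'')$) rather than just $>m$, but this is a trivial adjustment.
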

%
%
\begin{proof}  Recall that a point $p\in M$ satisfies $K_u(p)=0$ for some $u\in \CMI(M,\R^n)$ if and only if $p$ is a critical point of $\Gscr(u)$; see \cite[Eq.\ (3.8)]{HoffmanOsserman1980} or, e.g., \cite[Eq.\ (2.87)]{AlarconForstnericLopez2021Book}. 

Define $X=\{u\in\CMI_{\rm full}(M,\R^n): K_u(p_0)\neq 0\}$,
and note that $X$ is open in $\CMI_{\rm full}(M,\R^n)$. Since the Gauss map assignment $\Gscr$ in \eqref{eq:G-F} is surjective by \cite[Theorem 1.1]{AlarconForstnericLopez2019JGA} (see also \cite[Theorem 5.4.1]{AlarconForstnericLopez2021Book}), it turns out that 
\begin{equation}\label{eq:GX}
	\Gscr(X)=\{G\in \Oscr_{\rm full}(M,{\bf Q}^{n-2})\colon dG_{p_0}\neq 0\},
\end{equation}
which is easily seen to be open and dense in $\Oscr(M,{\bf Q}^{n-2})$.
Choose an exhaustion 
\begin{equation}\label{eq:exhaustionL}
L_1\Subset L_2\Subset\cdots \subset \bigcup_{j\ge 1} L_j=M
\end{equation} 
of $M$ by smoothly bounded Runge compact domains such that $p_0\in \mathring L_1$. For each $i,j\in\N$, define
\[
	\Omega_{i,j}=\big\{ u\in X\colon |K_u(p_0)|\,\dist_u(p_0,bL_j)^2>i\big\},
\]
where $\dist_u$ denotes the intrinsic Riemannian distance induced on $M$ by the Euclidean metric in $\R^n$ via $u$. Note that $\Omega_{i,j}$ is open in $X$ for all $i,j\in\N$ by the Cauchy estimates. The set
\[
	\Omega_i=\bigcup_{j\in\N}\Omega_{i,j} =  \big\{ u\in X\colon |K_u(p_0)|\,d_u(p_0)^2>i\big\}
\]
is therefore open as well, and we have in view of \eqref{eq:GX}  that
\begin{multline}\label{eq:bigcap}
	\bigcap_{i\in\N} \Gscr(\Omega_i)=\big\{G\in \Gscr(X): \sup_{u\in \Gscr^{-1}(G)} |K_u(p_0)|\, d_u(p_0)^2=+\infty\big\}
	\\ = \{G\in  \Oscr^{\neg K,p_0}_{\rm full}(M,{\bf Q}^{n-2})\colon dG_{p_0}\neq 0\big\}.
\end{multline}
We claim that 
\begin{equation}\label{eq:GOmegai}
	\text{$\Gscr(\Omega_i)$ is open and dense in $\Oscr(M,{\bf Q}^{n-2})$ for all $i\in\N$}. 
\end{equation}
Indeed, Theorem \ref{th:Gauss-F} guarantees that $\Gscr(\Omega_i)$ is open in $\Oscr_{\rm full}(M,{\bf Q}^{n-2})$, hence in $\Oscr(M,{\bf Q}^{n-2})$, for all $i\in\N$. For the density, since $\Gscr$ is continuous and  $\Gscr(X)$ is dense in $\Oscr(M,{\bf Q}^{n-2})$, it suffices to see that each $\Omega_i$ is dense in $X$. For this, fix $i\in\N$ and choose $u\in X$, a compact set $L\subset M$, and a number $\epsilon>0$. In particular, $K_u(p_0)\neq 0$. We may assume that $p_0\in \mathring L$. By \cite[Theorem 1.2]{AlarconCastro-Infantes2019APDE} (see also \cite[Theorem 3.9.1]{AlarconForstnericLopez2021Book}), there is a complete full conformal minimal immersion $\tilde u:M\to\R^n$ such that $|\tilde u-u|<\epsilon$ on $L$ and $K_{\tilde u}(p_0)=K_u(p_0)\neq 0$ (we actually need not interpolate, it is enough with ensuring that $K_{\tilde u}$ is so close to $K_u$ on $L$ that $K_{\tilde u}(p_0)\neq 0$, which holds by Cauchy estimates provided $\epsilon>0$ is sufficiently small). So, $\tilde u\in X$. By \eqref{eq:exhaustionL} and completeness of $\tilde u$, there exists $j\in\N$ so large that $\dist_{\tilde u}(p_0,bL_j)^2>i/|K_{\tilde u}(p_0)|$, and hence $\tilde u\in \Omega_{i,j}\subset\Omega_i$. This shows that $\Omega_i$ is dense in $X$ for all $i\in\N$, thereby proving \eqref{eq:GOmegai}. Finally, since $\Oscr(M,{\bf Q}^{n-2})$ is a completely metrizable space (see \cite{Arens1946} or, e.g., \cite[p.\ 100]{McCoyNtantu1988}), it follows from \eqref{eq:GOmegai} and the Baire Category Theorem (see, e.g., \cite[Definition 25.1 and Corollary 25.4]{Willard1970}) that
$\bigcap_{i\in\N} \Gscr(\Omega_i)$ is a dense $G_\delta$ set in $\Oscr(M,{\bf Q}^{n-2})$, which completes the proof in view of \eqref{eq:bigcap}.
\end{proof}
%
%
\begin{remark}\label{re:K}
The same proof of Theorem \ref{th:residual} shows the following more general statement. Let  $M$, $n$, and $p_0$ be as in the theorem, and let  $F\in H^1(M,\R^n)$.  Then the set of maps $G\in \Oscr_{\rm full}(M,{\bf Q}^{n-2})$ such that $dG_{p_0}\neq 0$ and there is a sequence of conformal minimal immersions $\{u_j\}_{j\in\N}\subset \Gscr^{-1}(G)$ with $\Flux(u_j)=F$ for all $j\in\N$ and  $\lim_{j\to\infty} |K_{u_j}(p_0)|d_{u_j}(p_0)^2=+\infty$, is a dense $G_\delta$ set in $\Oscr(M,{\bf Q}^{n-2})$. 

In particular, the set of maps $G\in \Oscr_{\rm full}(M,{\bf Q}^{n-2})$ such that $dG_{p_0}\neq 0$ and there is a sequence of full holomorphic null curves $x_j:M\to\C^n$, $j\in\N$, such that $[dx_j]=G$ for all $j\in\N$ and  $\lim_{j\to\infty} |K_{x_j}(p_0)|d_{x_j}(p_0)^2=+\infty$, is a dense $G_\delta$ set in $\Oscr(M,{\bf Q}^{n-2})$.
\end{remark}

%
%
\subsection{Some background in $\r^3$}\label{sec:wei3}
The examples that we shall give in Sections \ref{sec:Example} and \ref{sec:example-2}, concerning the inclusions \eqref{eq:inclu-example} and \eqref{eq:complete}, are minimal surfaces in $\r^3$. Let us recall the Weierstrass formula in this dimension; see e.g. \cite[Sec.\ 2.5 and 2.6]{AlarconForstnericLopez2021Book} and cf.\ \eqref{eq:conversely}.  Let $u=(u_1,u_2,u_3)\colon M\to \r^3$ be a conformal minimal immersion from an open Riemann surface $M$ into $\r^3$ and assume that $u_3$ is not constant. Writing  
\[
\phi_3=2\partial u_3\quad \text{and}\quad g=\frac{\partial u_3}{\partial u_1-\imath \partial u_2},
\]
a holomorphic $1$-form and a meromorphic function on $M$, respectively, we have that
\[
2\partial u=\left(\frac12\big(\frac1{g}-g\big),\frac{\imath}{2}\big(\frac1{g}+g\big),1\right)\phi_3.
\]
Conversely, if $\phi_3\not\equiv 0$ and $g$ are a   holomorphic $1$-form and a meromorphic function on $M$ such that the zeros of $\phi_3$ coincide with the zeros and poles of $g$, with  the same order,  then  
\[
\Phi=(\phi_1,\phi_2,\phi_3)=\left(\frac12\big(\frac1{g}-g\big),\frac{\imath}{2}\big(\frac1{g}+g\big),1\right)\phi_3
\]
is holomorphic and satisfies $\Phi\neq 0$ and $\sum_{j=1}^3 \phi_j^2=0$ everywhere on $M$. If in addition $\Phi$ has no real periods (which always holds if $M$ is simply-connected), then the map  
\[
u\colon M\to \r^3,\quad u(p)=\int^p \Re(\Phi),
\]
is well defined (up to translations) and determines a conformal minimal immersion with $2\partial u=\Phi$. It turns out that $g$ is the complex Gauss map of $u$ and 
\begin{equation}\label{eq:ex-met-min}
2|\partial u|^2= \frac{(1+|g|^2)^2}{2 |g|^2} |\phi_3|^2
\end{equation}
is the metric  on $M$ induced  by the Euclidean one in $\r^3$ via $u$. Furthermore, the Gauss curvature function $K_u\colon M\to (-\infty,0]$ of $u$ is given by 
\begin{equation}\label{eq:forcurv3}
K_u=-\frac{16 |g|^2|dg|^2}{|\phi_3|^2 (1+|g|^2)^4}.
\end{equation}

%
%
\subsection{The inclusion \eqref{eq:inclu-example} is in general proper}\label{sec:Example}
Indeed, let 
\[
	M=\{\zeta\in \c \colon \Im(\zeta)>|\Re(\zeta)|\}
\] 
and consider the holomorphic map $g\colon M\to \cp^1\equiv {\bf Q}^{1}$ given by
\[
	 g(\zeta)=e^\zeta,\quad \zeta \in M.
\]
Note that $g$ has no critical points. 
Every immersion $u\in \Gscr^{-1}(g)\subset {\rm CMI}_{\rm full}(M,\R^3)$ with $4\partial u=\big(\imath(e^\zeta-e^{-\zeta}),e^\zeta+e^\zeta,2\imath\big)d\zeta$  (a piece of a helicoid) satisfies $K_u(\imath t)=-1$ for all $t>0$ and $\lim_{t\to +\infty}  d_u(\imath t)=+\infty$.
This shows that $g\in  \Oscr^{^\neg K}_{\rm full}(M,{\bf Q}^1)$. 

Fix a point $p\in M$ and let us see that $g\in \Oscr^{K,p}_{\rm full}(M,{\bf Q}^1)$. Reason by contradiction and assume that there is a sequence $u_j=(u_{j,1},u_{j,2},u_{j,3})\in\Gscr^{-1}(g)$ such that
\begin{equation}\label{eq:Kuj}
	K_{u_j}(p)=-1 \quad \text{and}\quad d_{u_j}(p)>j\quad \text{for all }j\in\N.
\end{equation}
Write $|\partial u_j|^2=|f_j|^2 (1+|g|^2)^2|d\zeta|^2$ for the  holomorphic function 
\[
f_j=\frac{\partial u_{j,3}}{g}\colon M\to \c_*=\C\setminus\{0\};
\]
 see \eqref{eq:ex-met-min}. Defining $h\colon M\to \c_*$ by $h(\zeta)=e^{-\imath \zeta}$,  we have  that
\[
  (1+|g|^2)^2=\big(1+e^{2 \Re(\zeta)}\big)^2 <\big(1+e^{2 \Im(\zeta)}\big)^2 \leq 4  e^{4 \Im(\zeta)} =4|h|^4,
\]
hence 
\[
2|\partial u_j|^2 \leq \chi_j:=8|f_jh^2|^2|d\zeta|^2.
\]
 Note that $\chi_j$ is a flat metric on $M$ and, by \eqref{eq:Kuj} and the last inequality, the geodesic distance from $p$ to the boundary of $(M,\chi_j)$ satisfies
\begin{equation}\label{eq:dchij}
	d_{\chi_j}(p)>j\quad \text{for all }j\in\N.
\end{equation}
Let $F_j:M\to\C$ be any holomorphic (noncritical) function such that $F_j(p)=0$ and $|F_j'|^2|d\zeta|^2= \chi_j$. In view of \eqref{eq:forcurv3} and \eqref{eq:Kuj}, we have that
\[
	\frac{2|g'|}{|f_j|(1+|g|^2)^2}=1,
\]
and hence
\begin{equation}\label{eq:Fj'}
	|F_j'(p)|=\frac{4 \sqrt{2}|g'h^2|}{(1+|g|^2)^2}(p)\neq 0\quad \text{for all }j\in\N.
\end{equation}

Let us see that for each $j\in\N$ there exist a domain $\Omega_j\subset M$, with $p\in \Omega_j$, and a round disc $D_j\subset \C$, centered at the origin and of  radius larger than ${j}$, such that $F_j:\Omega_j\to D_j$ is a biholomorphism. Indeed, observe that  $F_j\colon (M,\chi_j)\to (\c,|d\zeta|^2)$ is a local isometry. Let $D_j\subset \c$ be any disc centered at the origin with radius $r_j\in (j,d_{\chi_j}(p))$, and denote by $\Omega_j$ the connected component of $F_j^{-1}(D_j)$ containing $p$.  By \eqref{eq:dchij}  any geodesic ray in  $(M,\chi_j)$ starting at $p$ has length greater than $r_j$,  hence $F_j\colon \Omega_j\to D_j$ is a proper surjective local diffeomorphism. Therefore $F_j\colon \Omega_j\to D_j$ is  a covering map and, since $D_j$ is simply-connected, it turns out that $F_j\colon \Omega_j\to D_j$ is a biholomorphism.

The sequence of holomorphic functions $F_j^{-1}:D_j\to M\subset \C$ is normal (note that $\C\setminus M$ is infinite), so there is a limit holomorphic function 
\[
	\varphi=\lim_{j\to\infty}F_j^{-1}:\bigcup_{j\in\N}D_j=\C\to M\cong\D. 
\]
Finally, \eqref{eq:Fj'} implies that  $|(F_j^{-1})'(0)|\neq 0$ does not depend on $j$, and hence $\varphi$ is not constant, a contradiction. This shows that $g\in \Oscr^{K,p}_{\rm full}(M,{\bf Q}^1)$ for all $p\in M$.

%
%
\subsection{The inclusion \eqref{eq:complete} is in general proper}\label{sec:example-2}

An example of this has to be a map without critical points. We shall use the following.
\begin{lemma}\label{le:JX}
 There exists a nowhere vanishing    holomorphic function $g\colon \d\to \c_*$  without critical points  such that
 $(|g|+1/|g|)^2|d\zeta|^2$ is a complete metric on $\d$. 
 
 In particular, there are  complete minimal surfaces  contained in a slab of $\r^3$,  without flat points, and with Gauss map omitting two points of the sphere.
\end{lemma}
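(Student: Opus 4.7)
The natural approach is to set $g=e^F$ for some holomorphic $F\colon \D\to\C$ to be constructed: then $g$ is automatically nowhere vanishing, $g'=F'e^F$ is noncritical iff $F'\neq 0$, and the metric becomes $(|g|+1/|g|)^2|d\zeta|^2=4\cosh^2(\Re F)|d\zeta|^2$. The first assertion thereby reduces to producing an $F\in \Oscr(\D)$ with $F'\neq 0$ on $\D$ and with $\int_\gamma\cosh(\Re F)|d\zeta|=+\infty$ for every divergent path $\gamma$ in $\D$.

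To construct such an $F$, I would use a Jorge--Xavier-style labyrinth argument based on Runge's theorem. Fix a sequence of pairwise disjoint compact simple arcs $\Gamma_n\subset \D$ with endpoints in the interior of $\D$, chosen so that (i) each finite union $K_N=\bigcup_{n\le N}\Gamma_n$ is Runge in $\D$ (simple interior arcs do not separate), (ii) every divergent path in $\D$ meets a prescribed thin closed tubular neighborhood $T_n$ of $\Gamma_n$ for all but finitely many $n$, and (iii) the widths of these tubes admit a lower bound $\delta_n>0$. Starting from $F_1(\zeta)=\zeta$ (so $F_1'\equiv 1$), inductively build $F_n=F_{n-1}+p_n$, where $p_n$ is a polynomial produced by Runge's theorem on $K_{n-1}\cup T_n$ with $\Re p_n\ge n$ on $T_n$ and $\|p_n\|_{C^1(K_{n-1})}<2^{-n}$. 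The uniform-on-compacta limit $F=\lim F_n$ is then holomorphic on $\D$, satisfies $\Re F\ge n-1$ on $T_n$ for all sufficiently large $n$, and has $F'$ nowhere zero on $\D$ by Cauchy estimates (since the corrections $p_n'$ are summable in $C^0$ on each compactum and $F_1'\equiv 1$). Completeness of $4\cosh^2(\Re F)|d\zeta|^2$ follows immediately: any divergent $\gamma$ traverses $T_n$ with Euclidean length at least $\delta_n$ for all large $n$, contributing at least $\delta_n e^{n-1}/2$ to its length, and this sum diverges for any reasonable choice of $\delta_n$ (e.g.\ $\delta_n=1/n$).

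For the \emph{in particular} statement, apply the Weierstrass representation to the pair $(1/g,g)$. The $1$-forms
\[
\phi_1=\tfrac{1}{2}(1/g-g)\,d\zeta,\quad \phi_2=\tfrac{\imath}{2}(1/g+g)\,d\zeta,\quad \phi_3=d\zeta
\]
are holomorphic on $\D$, satisfy $\phi_1^2+\phi_2^2+\phi_3^2=0$, and have no common zero since $g$ is nowhere zero, so $u(\zeta)=\Re\int_0^\zeta(\phi_1,\phi_2,\phi_3)$ defines a conformal minimal immersion $u\colon \D\to\r^3$. Its third coordinate equals $\Re\zeta\in(-1,1)$, so $u(\D)$ lies in the slab $|x_3|<1$; its induced metric is exactly $(|g|+1/|g|)^2|d\zeta|^2$, hence complete; its generalized Gauss map is the $g\colon \D\to\C_*$ we constructed, omitting the two points $0,\infty\in\CP^1$ (the north and south poles of $\S^2$ under stereographic projection); and it has no flat points, since the Gauss curvature of a minimal surface in $\r^3$ vanishes precisely at the critical points of its Gauss map, and here $g'\ne 0$.

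The main obstacle is the labyrinth construction itself: choosing the arcs $\Gamma_n$ and tubes $T_n$ carefully enough to trap every divergent path while preserving the Runge property at each stage, and simultaneously controlling the $C^1$-size of the Runge corrections so that $F'$ stays nowhere zero in the limit. This is the technical heart of the original Jorge--Xavier argument, and the plan above follows its standard implementation.
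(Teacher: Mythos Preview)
Your overall strategy—set $g=e^F$ and build $F$ by a Jorge--Xavier labyrinth plus Runge approximation—matches the paper's, and your ``in particular'' paragraph is correct.

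The gap is in the claim that $F'$ is nowhere zero. You control $\|p_n\|_{C^1(K_{n-1})}<2^{-n}$, but on $T_n$ you force $\Re p_n\ge n$; a polynomial that is $C^1$-small on $K_{n-1}$ and has real part $\ge n$ on $T_n$ will typically have large derivative in between, and nothing prevents $F_n'=1+\sum_{m\le n}p_m'$ from vanishing on $T_n$ or on the transition region. All later corrections $p_m$, $m>n$, are by design $C^1$-small there (once you replace your $K_N=\bigcup_{m\le N}\Gamma_m$—which is only a union of arcs and does not even guarantee convergence of $\sum p_m$ on compacta of $\D$—by an honest compact exhaustion), so any such zero survives in the limit. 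The sentence ``$F'$ nowhere zero by Cauchy estimates, since the corrections $p_n'$ are summable in $C^0$ on each compactum and $F_1'\equiv 1$'' is therefore unjustified: on the compactum containing $T_n$, the single term $p_n'$ is \emph{not} small.

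The paper avoids this by working one derivative down. It Runge--Mergelyan approximates a prescribed positive continuous function on the labyrinth by a holomorphic map $h\colon\D\to\C_*$ (this is where the Oka property of $\C_*$ enters; equivalently, approximate $\log f$ and exponentiate), and then sets $g(\zeta)=\exp\int_0^\zeta h$. Now $g'=hg$ is nowhere zero automatically, with no derivative bookkeeping needed. Your construction is repaired in the same way: run the Runge argument on $F'$ as a $\C_*$-valued map rather than on $F$, and then integrate.
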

\begin{proof}
We adapt the argument in \cite{JorgeXavier1980AM}. Take a  sequence $\{c_j\}_{j\in \n}$ of (connected) circle arcs    in $\d$ centered at the origin and  a sequence of positive real numbers  $\{\epsilon_j\}_{j\in \n}\searrow 0$ such that:
 \begin{enumerate}[{\rm (a)}]
 \item The sequence $\{r_j\}_{j\in \n}\subset ]0,1[$, where $r_j$ is the radius of $c_j$ for all $j\in \n$, is strictly increasing and converging to $1$.
 \item $c_j\cap ]0,1[\neq \varnothing$ for all $j\in \n$ and every divergent path  in $\d$ disjoint from  $ \bigcup_{j\in \n} c_j$ has infinite Euclidean length.
 \item The compact sets $C_j:=\{\zeta \in \c\colon \dist(\zeta,c_j)\leq \epsilon_j\}$, $j\in \n$, are pairwise disjoint  discs  contained in $\d$.
  \end{enumerate}
Consider the closed subset $F= \big(\bigcup_{j\in \n} C_j\big)\cup [0,1[$ in $\d$, and observe that both $F$ and  $\d\setminus F$ are path-connected. Choose a continuous function $f\colon F\to ]0,+\infty[$ such that
 \begin{equation}\label{eq:jo-xa}
 f|_{C_j}=(\epsilon_j)^j \;\;\text{and} \;\;\int_{r_j+\epsilon_j}^{r_{j+1}-\epsilon_{j+1}} f(t)dt>1/\epsilon_{j+1} \text{\;\;for all $j\in \n$}.
 \end{equation}
For any continuous positive function $\delta: F\to \r$,  a  standard recursive application of the classical Runge-Mergelyan  theorem  gives a nowhere vanishing holomorphic function  $h\colon \d\to \c_*$  satisfying $|h-f|<\delta$ on $F$ (recall that $\c_*$ is an Oka manifold). A similar Carleman type argument can be found in  \cite[Theorem 3.8.6]{AlarconForstnericLopez2021Book}.  Consider the nowhere vanishing holomorphic function
 \[
 g:\d\to\c_*,\quad g(\zeta)=e^{\int_0^\zeta h(u)du},
 \]
 and note that $g$ has no critical points since $h$ has no zeroes.
If  the function $\delta$ is suitably chosen then $|g|>e^{1/\epsilon_{j}}$ on $C_{j}$ for every $j\in \n$ sufficiently large. Indeed, observe that $g|_{C_j}$ is approximately constant $g(r_j)$ by the first condition in \eqref{eq:jo-xa} while  $g(r_j)=e^{\int_{0}^{r_{j}} h(u)du} \approx e^{\int_{0}^{r_{j}} f(u)du}>e^{1/\epsilon_j}$  by the second one, whenever that $j$ is large enough and  $\delta$ is sufficiently small.
 In view of {\rm (b)}  and  {\rm (c)}, and reasoning as in \cite{JorgeXavier1980AM},  the metric $(|g|+1/|g|)^2|d\zeta|^2$ is complete.   

For the final assertion of the lemma, consider any conformal minimal immersion  $\d\to \r^3$ with Weierstrass data $(g,\phi_3=d\zeta)$.
\end{proof}

Fix $R>1$ and  call  $D=\{|\zeta|<R\}\subset \c$. Let $g$ be given by  Lemma \ref{le:JX}, and set  
\[
g_0\colon D\to \c_*\subset \CP^1\equiv{\bf Q}^{1}, \quad g_0(\zeta)=g(\zeta^2 R^{-2}).
\]
It is clear that $g_0$ is noncritical on $D_*=D\setminus\{0\}$, 
\begin{equation}\label{eq:g0sim}
  \text{$g_0(-\zeta)=g_0(\zeta)$ for all $\zeta\in D$,}
  \end{equation}
  and
\begin{equation}\label{eq:g0com}
 \text{$(|g_0|+1/|g_0|)^2|d\zeta|^2$ is a complete metric on $D$.}
 \end{equation}

Set  
\[
\text{$M=\{1/R<|\zeta|<R\}\subset D$ \quad and \quad $G=g_0|_M\in \Oscr_{\rm full}(M,{\bf Q}^{1})$.}
\]
We claim that  $G\notin \Oscr^{\rm c}_{\rm full}(M,{\bf Q}^{1})$. Indeed, assume that $ \Gscr^{-1}(G)$ contains a complete  immersion $u \colon M\to \r^3$. Since $g_0( \{|\zeta|\leq 1\})\subseteq \CP^1\equiv \s^2$ has finite spherical area (counting multiplicities) and $G( \{1/R< |\zeta|\leq 1\})\subseteq g_0( \{|\zeta|\leq 1\})$, the complete conformal minimal immersion
$u\colon \{\zeta\in \c\colon 1/R< |\zeta|\leq 1\}\to \r^3$
has finite total curvature (see  \cite[Eq.\ (2.91)]{AlarconForstnericLopez2021Book}). This implies, by Huber's theorem (see \cite{Huber1957} or e.g. \cite[Theorem 2.6.4]{AlarconForstnericLopez2021Book}), that the annular end  $ \{1/R< |\zeta|\leq 1\}$ is parabolic, a contradiction.

To finish, let  us show that $G\in  \Oscr^{^\neg K,\zeta_0}_{\rm full}(M,{\bf Q}^{1})$ for all  $\zeta_0\in M$. 
Indeed, fix $\zeta_0\in M$ and  recall that $G$ is noncritical at $\zeta_0$.  
For each even $j\in \n$ set $f_j\colon D_*\to \c_*$, $f_j(\zeta)=\zeta^{-j}$, and note that \eqref{eq:g0sim} and \eqref{eq:g0com} ensure that
\[
  \text{$f_j (g_0,1/g_0,1)d\zeta$ is exact in $D_*$.}
\]
Thus,  there is an immersion  $u_j\in \Gscr^{-1}(G)\subset  \CMI_{\rm full}(M,\R^3)$ with
\[
2\partial u_j=\frac12\big(G(\zeta)-1/G(\zeta),\imath(G(\zeta)+1/G(\zeta)),2 \big)f_j(\zeta)d\zeta;
\]
see Section \ref{sec:wei3}.
By \eqref{eq:forcurv3} we have that
\begin{equation}\label{eq:g0cur-est}
\text{$ |K_{u_j}(\zeta_0)|= A |\zeta_0|^{2j}$  \quad for \;\;$A=\frac{16|G(\zeta_0)G'(\zeta_0)|^2}{(1+|G(\zeta_0)|^2)^4}\neq 0$.}
    \end{equation}
   %
By \eqref{eq:ex-met-min} we have that  
\[
4|\partial u_j|^2=|f_j|^2 (|G|+1/|G|)^2|d\zeta|^2= (|G|+1/|G|)^2|\zeta|^{-2j}|d\zeta|^2,
\]
which ensures that
 $4|\partial u_j|^2\geq  R^{-2j}(|G|+1/|G|)^2|d\zeta|^2$  on $M$,   proving by \eqref{eq:g0com} that 
 \begin{equation}\label{eq:g0com2}
 \text{$2|\partial u_j|^2$ is complete on $\{1\leq |\zeta|<R\}$.}
\end{equation}
Moreover,  \eqref{eq:ex-met-min},  \eqref{eq:g0com2}, and  the inequality
 \begin{equation}\label{eq:g0com3}
 \text{$2|\partial u_j|^2\geq 2 |\zeta|^{-2j}|d\zeta|^2$
  \quad on \; $M$,}
 \end{equation}
  give  that
\begin{equation}\label{eq:g0du0}
 d_{u_j}(\zeta_0)\geq \sqrt{2}\int_{1/R}^{|\zeta_0|}t^{-j}dt= \frac{ \sqrt{2}}{j-1}(R^{j-1}-|\zeta_0|^{1-j}).
\end{equation}
%
%
By \eqref{eq:g0cur-est}, \eqref{eq:g0du0}, and the fact that $R |\zeta_0|>1$ it turns out that
\[
\lim_{j\to \infty} |K_{u_j}(\zeta_0)|d_{u_j}(\zeta_0)^2\geq \lim_{j\to \infty} \frac{2A |\zeta_0|^2}{(j-1)^2}\big((R |\zeta_0|)^{j-1}-1\big)^2 =+\infty,
\]
proving that  $G\in \Oscr^{^\neg K,\zeta_0}_{\rm full}(M,{\bf Q}^{1})$ as claimed.

%
%
\subsection{Two remarks on homotopy theory}\label{sec:homotopy}
Assume that $\wt M$ is an open Riemann surface and $\pgot \colon \wt M\to M$ is a holomorphic covering map, and consider the map $\pgot_*: \Oscr(M,{\bf Q}^{n-2}) \to \Oscr(\wt M,{\bf Q}^{n-2})$ given by $\pgot_*(G)=G\circ \pgot$ for $G\in \Oscr(M,{\bf Q}^{n-2})$. A first simple observation is that if $\tilde p\in \wt M$ and $p=\pgot(\tilde p)$, then 
\[
	\pgot_*(\Oscr^{^\neg K,p}_{\rm full}(M,{\bf Q}^{n-2}))\subset \Oscr^{^\neg K,\tilde p}_{\rm full}(\wt M,{\bf Q}^{n-2}). 
\]
The proof is an easy exercise and we leave the details to the interested reader.

A second observation is that the spaces $\Oscr^{^\neg K,p}_{\rm full}(M,{\bf Q}^{n-2})$, $p\in M$, and $\Oscr^{^\neg K}_{\rm full}(M,{\bf Q}^{n-2})$ have the same homotopy type as the space $\mathscr{C}(M,{\bf Q}^{n-2})$ of continuous maps $M\to{\bf Q}^{n-2}$ endowed with the compact-open topology. 
Indeed, by \cite[Theorem 1.2]{AlarconLarusson2024Pisa} we have that the inclusions 
\[
	\Oscr^{\rm c}_{\rm full}(M,{\bf Q}^{n-2})\hookrightarrow \Oscr_{\rm full}(M,{\bf Q}^{n-2})\hookrightarrow \mathscr{C}(M,{\bf Q}^{n-2})
\] 
are weak homotopy equivalences (and are homotopy equivalences if $M$ is of finite topological type). Taking into account \eqref{eq:complete} and following the argument in \cite[Proof of Theorem 1.2(a)]{AlarconLarusson2024Pisa}, it turns out that \cite[Theorem 3.1]{AlarconLarusson2024Pisa} implies that the inclusions 
\begin{equation}\label{eq:he}
	\Oscr^{\rm c}_{\rm full}(M,{\bf Q}^{n-2})\hookrightarrow \Oscr^{^\neg K,p}_{\rm full}(M,{\bf Q}^{n-2}) \hookrightarrow
	\Oscr^{^\neg K}_{\rm full}(M,{\bf Q}^{n-2}) \hookrightarrow
	 \Oscr_{\rm full}(M,{\bf Q}^{n-2})
\end{equation}
are weak homotopy equivalences for every $p\in M$ as well (cf.\ \cite[Remark 1.3]{AlarconLarusson2021JGA}). 
%
Therefore, the inclusions in the diagram
\[
\xymatrix{
	\Oscr^{^\neg K,p}_{\rm full}(M,{\bf Q}^{n-2})  \ar@{^{(}->}[d] \ar@{^{(}->}[dr]  
	&   \\
	\Oscr^{^\neg K}_{\rm full}(M,{\bf Q}^{n-2}) \ar@{^{(}->}[r]   &   \mathscr{C}(M,{\bf Q}^{n-2}).
}
\]
are weak homotopy equivalences 
for every point $p\in M$.
Recall that a continuous map $f:X\to Y$ between topological spaces is a {\em weak homotopy equivalence} if it induces a bijection of path components of the two spaces as well as an isomorphism $\pi_k(f):\pi_k(X)\to\pi_k(Y)$ of their homotopy groups for every $k\in \N$ and arbitrary base points. In particular, the three spaces in the above diagram have the same rough topological shape.  This reduces the determination of the homotopy type of $\Oscr^{^\neg K,p}_{\rm full}(M,{\bf Q}^{n-2})$ and $\Oscr^{^\neg K}_{\rm full}(M,{\bf Q}^{n-2})$ to a purely topological problem.


\subsection*{Acknowledgements}
This research was partially supported by the State Research Agency (AEI) via the grants no.\ PID2020-117868GB-I00 and PID2023-150727NB-I00, and the ``Maria de Maeztu'' Unit of Excellence IMAG, reference CEX2020-001105-M, funded by MICIU/AEI/10.13039/501100011033 and ERDF/EU, Spain.




\medskip
\noindent Antonio Alarc\'{o}n, Francisco J. L\'opez
\newline
\noindent Departamento de Geometr\'{\i}a y Topolog\'{\i}a e Instituto de Matem\'aticas (IMAG), Universidad de Granada, Campus de Fuentenueva s/n, E--18071 Granada, Spain.
\newline
\noindent  e-mail: {\tt alarcon@ugr.es}, {\tt fjlopez@ugr.es}

\end{document}